\theoremstyle{plain}
\newtheorem{theorem}{Theorem}[section]
\newtheorem{proposition}[theorem]{Proposition}
\newtheorem{corollary}[theorem]{Corollary}
\theoremstyle{definition}
\theoremstyle{remark}
\newtheorem{remark}[theorem]{Remark}
\def\pgh{\mathcal{P}_E({G})}
\def\sg{\mathcal{S}(G)}
\begin{document}
\title{On the minimal (edge) connectivity of graphs and its applications to power graphs of finite groups}

\author[Parveen, Manisha, Jitender Kumar]{Parveen, Manisha, Jitender Kumar$^{*}$}
 \address{$\text{}^1$Department of Mathematics, Birla Institute of Technology and Science Pilani, Pilani-333031, India}
\email{p.parveenkumar144@gmail.com, yadavmanisha2611@gmail.com, jitenderarora09@gmail.com}

\begin{abstract} 
In an earlier work, finite groups whose power graphs are minimally edge connected have been classified. In this article, first we obtain a necessary and sufficient condition for an arbitrary graph to be minimally edge connected. Consequently, we characterize finite groups whose enhanced power graphs and order superpower graphs, respectively, are minimally edge connected. Moreover, for a finite non-cyclic group $G$, we prove that $G$ is an elementary abelian $2$-group if and only if its enhanced power graph is minimally connected. Also, we show that $G$ is a finite $p$-group if and only if its order superpower graph is minimally connected.  Finally, we characterize all the finite nilpotent groups such that the minimum degree and the vertex connectivity of their order superpower graphs are equal.
 \end{abstract}

\subjclass[2020]{05C25}

\keywords{ edge connectivity, vertex connectivity, minimum degree, nilpotent groups. \\ *  Corresponding author}

\maketitle
\section{Historical Background and Preliminaries}
There are a number of graphs associated with finite groups, including the Cayley graph, power graph and the commuting graph. Such graphs gives some more insights between the algebraic properties of groups and graph theoretic properties of associated graphs. The use of algebraic techniques can give new constructions and analysis of graphs with crucial properties. Moreover,  Cayley graph has valuable applications in data mining and automata theory (see, \cite{a.kelarev2009cayley,kelarev2004labelled}). Hayat \emph{et al.} \cite{a.hayat2019novel} utilized commuting graphs to establish some Non-Singular with a Singular Deck (NSSD) molecular graphs. Due to these applications and connections between algebra and graph theory, certain subgraphs of commuting graphs, namely: power graphs and enhanced power graphs became a topic of discussion among researchers. 

The \emph{power graph} $\mathcal{P}(G)$ of a group $G$ is a simple undirected graph whose vertex set is $G$ and two vertices $x$ and $y$ are adjacent if either $x = y^m$ or $y = x^n$ for some $m, n \in \mathbb{N}$.  Various aspects of power graphs have been studied in the literature. Panda \cite{PANDA20201} proved that a finite group $G$ is a non-cyclic group of prime exponent if and only if the power graph $\mathcal{P}(G)$ is both non-complete and minimally edge-connected. Also, it was shown that a finite group $G$ is an elementary abelian $2$-group of rank at least $2$ if and only if $\mathcal{P}(G)$ is non-complete and minimally connected. Panda and Krishna \rm\cite{prasad2018minimum} classified finite nilpotent group $G$ for which the minimum degree and the vertex connectivity of $\mathcal{P}(G)$ are equal. For more details on the power graphs, we refer the readers to the survey paper \cite{powersurvey}. Then Aalipour \emph{et al.} \cite{a.Cameron2016}  introduced a new graph so-called the enhanced power graph. The \emph{enhanced power graph} $\mathcal{P}_E(G)$ of a group $G$ is a simple undirected graph whose vertex set is $G$, and two distinct vertices $x$ and $y$ are adjacent if $x, y \in \langle z \rangle$ for some $z \in G$.  Recently, the enhanced power graph has received significant attention from various researchers. Kumar \emph{et al.} \cite{a.parveen2022} described the groups such that the minimum degree and the vertex connectivity of their associated enhanced power graphs are equal. Also, they characterized finite groups whose (proper) enhanced power graphs are (strongly) regular. For more results on the enhanced power graphs, the reader may refer to the survey paper \cite{enhancedsurvey} and references therein. Another graph which contains power graph is the order superpower graph which was introduced by Hamzeh and Ashrafi \cite{a.Hamzeg2018}. The \emph{order superpower graph} $\mathcal{S}(G)$ of a group $G$ is a simple undirected graph with vertex set $G$ and two distinct vertices are adjacent if and only if the order of one vertex divides the order of the other. Together with the graph theoretic properties of $\mathcal{S}(G)$, Hamzeh and Ashrafi \cite{a.Hamzeg2018} have explored the relationship between the order superpower graph and the power graph. Notice that for a finite group $G$, the power graph $\mathcal{P}(G)$ is a spanning subgraph of the enhanced power graph $\mathcal{P}_E(G)$ and the order superpower graph $\mathcal{S}(G)$, respectively. The automorphism groups and full automorphism groups of the order superpower graph have been determined in \cite{HAMZEH201782} . Ma and Su \cite{ma2022order} investigated the independence number of the order superpower graph \( \mathcal{S}(G) \). Kumar et al. \cite{kumar2024superpower} established tight bounds for the vertex connectivity and studied Hamiltonian-like properties of the superpower graph for finite non-abelian groups possessing an element of exponent order.

This motivates the present paper, which considers the problem of classification of finite groups such that their enhanced power and superpower graphs, respectively, are minimally edge connected. In addition to that, we classify all the finite groups whose enhanced power graphs and order superpower graphs, respectively, are minimally connected. Finally, we determine all the finite nilpotent groups such that the minimum degree and the vertex connectivity of their order superpower graphs are equal.

Now, we briefly recall some notations and terminology that we used in the paper. A graph \emph{$\Gamma$} is a pair $\Gamma = (V,E)$, where $V(\Gamma)$ and $E(\Gamma)$ are the set of vertices and edges of $\Gamma$, respectively. Two distinct vertices $u$ and $v$ are adjacent, denoted by $u \sim v$, if there exists an edge $\{uv\}$ between $u$ and $v$. Multiple edges are the edges having the same endpoints. An edge $\{uv\}$ is called \emph{loop} if $u=v$. Simple graphs are the graphs with no loops or multiple edges. The set $N(x)$ of all the vertices which are adjacent to the vertex $x$ in $\Gamma$ is called the \emph{neighbourhood} of $x$. Furthermore, we denote $N[x] = N(x) \cup \{x\}$. A subgraph $\Gamma^{'}$ of a graph $\Gamma$ is a graph such that $V(\Gamma^{'}) \subseteq V(\Gamma)$ and $E(\Gamma^{'}) \subseteq E(\Gamma)$. A graph $\Gamma$ is said to be \emph{complete} if any two distinct vertices are adjacent. A \emph{walk} $\lambda$ in $\Gamma$ from vertex $u$ to the vertex $w$ is a sequence of vertices $u=v_1,v_2,\ldots,v_m = w(m>1)$ such that $v_i \sim v_{i+1}$ for every $i \in \{1,2,\ldots,m-1\}$. A walk is said to be \emph{path} if no vertex is repeated. A graph $\Gamma$ is connected if each pair of vertices has a path in $\Gamma$. Otherwise, $\Gamma$ is disconnected. The number of vertices adjacent to the vertex $u$ is called the \emph{degree} of $u$ and is denoted as $deg(u)$. A graph is called \emph{regular} if each of its vertex has same degree. A vertex $u$ is said to be a \emph{dominating} vertex of a graph $\Gamma$ if $u$ is adjacent to all the other vertices of $\Gamma$. The maximum distance between two vertices of a connected graph $\Gamma$ is called the \emph{diameter} of $\Gamma$ and is denoted by $diam(\Gamma)$. The \emph{minimum degree} $\delta(\Gamma)$ of a graph $\Gamma$ is the smallest degree among all the vertices of $\Gamma$. A \emph{vertex (edge) cut-set} in a connected graph $\Gamma$ is a set $S$ of vertices (edges) such that the remaining subgraph $\Gamma - S$, by removing the set $S$, is disconnected or has only one vertex. The \emph{vertex connectivity} $\kappa(\Gamma)$(\emph{edge connectivity} $\kappa'(\Gamma)$) of a connected graph $\Gamma$ is the minimum size of a vertex (edge) cut-set. A graph is said to be \emph{minimally edge-connected} if for any edge $\epsilon$ of graph $\Gamma$, $\kappa'(\Gamma - \epsilon) = \kappa'(\Gamma)-1$. 
A graph is said to be \emph{minimally connected} if for any edge $\epsilon$ of graph $\Gamma$, $\kappa(\Gamma - \epsilon) = \kappa(\Gamma)-1$. The following results are useful in the sequel.

\begin{theorem}[{\cite[Theorem 6]{MR0398904}}]\label{ch1-edgecon.mindeg}
If the diameter of any graph is at most $ 2 $, then its edge connectivity and minimum degree are equal.
\end{theorem}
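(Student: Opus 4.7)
The plan is to prove the non-trivial direction $\kappa'(\Gamma) \geq \delta(\Gamma)$, since the reverse inequality holds in every graph (the edges incident to a vertex of minimum degree form an edge cut of size $\delta(\Gamma)$). I will argue by contradiction: assume $F$ is a minimum edge cut of $\Gamma$ with $|F| \leq \delta(\Gamma) - 1$, and let $A, B$ denote the vertex sets of the two components of $\Gamma - F$. The argument splits into two cases according to whether every vertex of $A$ is incident to some edge of $F$.

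In the first case there is a vertex $u \in A$ with no neighbor in $B$. For every $v \in B$, the pair $\{u,v\}$ is a non-edge, so the diameter-$2$ hypothesis yields a common neighbor $w_v$. Since $u$ has no neighbor in $B$, necessarily $w_v \in A$, which forces the edge $w_v v$ to lie in $F$. Distinct vertices $v$ produce distinct cut edges, so $|F| \geq |B|$, and in particular $|B| \leq \delta(\Gamma) - 1$. A double-count on the $B$-side,
\[
|F| = \sum_{v \in B} \deg_A(v) \geq \sum_{v \in B} \bigl(\delta(\Gamma) - (|B|-1)\bigr) = |B|\bigl(\delta(\Gamma) - |B| + 1\bigr),
\]
then closes this case, since the right-hand side is at least $\delta(\Gamma)$ for every integer $|B|$ in the range $1 \leq |B| \leq \delta(\Gamma) - 1$.

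In the second case every vertex of $A$ (and, by symmetry, of $B$) has a neighbor on the opposite side, so $|F| \geq \max(|A|, |B|)$. For any $u \in A$, $\deg(u) \leq (|A| - 1) + |B|$, hence $|A| + |B| \geq \delta(\Gamma) + 1$, and the analogous double-count on the $A$-side gives $|F| \geq |A|\bigl(\delta(\Gamma) - |A| + 1\bigr) \geq \delta(\Gamma)$, again contradicting $|F| \leq \delta(\Gamma) - 1$.

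The main obstacle I expect is the small integer optimisation step: verifying that $x(\delta(\Gamma) - x + 1) \geq \delta(\Gamma)$ for all integers $x \in [1, \delta(\Gamma)-1]$. This is the only concrete place where the diameter-$2$ hypothesis is spent (through the case split that produces the useful bound $|F| \geq |B|$ in the first case), while the rest of the argument is routine double-counting. Once this quadratic estimate is in hand, both cases close immediately and we obtain $\kappa'(\Gamma) = \delta(\Gamma)$.
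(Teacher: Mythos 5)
This statement is quoted in the paper as \cite[Theorem 6]{MR0398904} (Plesn\'{\i}k, 1975) and is not proved there, so there is no in-paper argument to compare against; I can only assess your proof on its own terms, and it is correct. The reduction to $\kappa'(\Gamma)\ge\delta(\Gamma)$ is standard, the identification of the two sides $A,B$ of a minimum edge cut $F$ is legitimate (a minimum cut leaves exactly two components), and both cases close as you claim: in Case 1 the diameter-$2$ hypothesis forces, for each $v\in B$, a common neighbour $w_v\in A$ of $u$ and $v$, giving $|F|\ge|B|$ and hence $|B|\le\delta(\Gamma)-1$, after which the double count $|F|\ge|B|\bigl(\delta(\Gamma)-|B|+1\bigr)\ge\delta(\Gamma)$ is valid because the concave quadratic $x(\delta-x+1)$ attains its minimum over integers $x\in[1,\delta-1]$ at the endpoints, where it equals $\delta$ and $2\delta-2$ respectively. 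Two cosmetic points: the parenthetical ``and, by symmetry, of $B$'' in Case 2 is unnecessary, since your Case 2 argument only uses that every vertex of $A$ meets $F$ (giving $|F|\ge|A|$, hence $|A|\le\delta(\Gamma)-1$) together with the $A$-side double count; and the intermediate inequality $|A|+|B|\ge\delta(\Gamma)+1$ is never actually used. Neither affects correctness.
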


\begin{theorem}[{\cite[Theorem 4.1.9]{b.westgraph}}] \label{ch1-vertex-edge-conectivity}
If $\Gamma$ is a simple graph, then
\[\kappa(\Gamma) \le \kappa'(\Gamma) \le \delta(\Gamma).\]
\end{theorem}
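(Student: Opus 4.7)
The plan is to establish the two inequalities separately. For the right-hand inequality $\kappa'(\Gamma) \le \delta(\Gamma)$, I would take a vertex $v$ of degree $\delta(\Gamma)$ and observe that the set of $\delta(\Gamma)$ edges incident with $v$ is an edge cut-set, since removing them isolates $v$ from the rest of $\Gamma$ (yielding a disconnected graph when $|V(\Gamma)| \ge 2$ and a one-vertex graph otherwise). Hence $\kappa'(\Gamma) \le \delta(\Gamma)$.

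For the left-hand inequality $\kappa(\Gamma) \le \kappa'(\Gamma)$ (Whitney's inequality), let $F$ be a minimum edge cut of size $k = \kappa'(\Gamma)$, and let $(A, B)$ be the bipartition of $V(\Gamma)$ induced by deleting $F$, so every edge of $F$ joins a vertex of $A$ to a vertex of $B$ and no other edge does. Write $e_i = x_i y_i$ with $x_i \in A$ and $y_i \in B$ for $i = 1, \dots, k$, and set $S = \{x_1, \dots, x_k\} \subseteq A$ and $T = \{y_1, \dots, y_k\} \subseteq B$, so that $|S|, |T| \le k$. The key observation is that every walk from $A$ to $B$ in $\Gamma$ must traverse at least one edge of $F$ and therefore pass through some vertex of $S$ and some vertex of $T$. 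Consequently, if $A \setminus S \neq \emptyset$, then $S$ is a vertex cut-set separating $A \setminus S$ from $B$, and $\kappa(\Gamma) \le |S| \le k$; the argument is symmetric when $B \setminus T \neq \emptyset$.

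The residual case is $A = S$ and $B = T$, which forces $|V(\Gamma)| \le 2k$. If $\Gamma$ is complete then $\kappa(\Gamma) = |V(\Gamma)| - 1 = \kappa'(\Gamma)$ and there is nothing to prove, so I would assume $\Gamma$ is non-complete and fix two non-adjacent vertices $u$ and $v$. Using the local structure of $\Gamma$ around $u$ and $v$ together with the endpoint assignment produced by $F$, one extracts a vertex cut-set of size at most $k$ by a careful selection from neighborhoods (for instance, by using an edge of $F$ incident to $u$ and swapping endpoints appropriately). I expect this sub-case to be the main technical obstacle of the argument, since it requires verifying both that the candidate set genuinely disconnects some pair of vertices of $\Gamma$ and that its cardinality does not exceed $k$; once this is done, combining all three cases gives $\kappa(\Gamma) \le \kappa'(\Gamma)$ and completes the proof.
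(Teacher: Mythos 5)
This statement is quoted in the paper as a known result from West's textbook, so there is no in-paper proof to compare against; your proposal has to stand on its own. The inequality $\kappa'(\Gamma)\le\delta(\Gamma)$ is handled correctly and completely. For $\kappa(\Gamma)\le\kappa'(\Gamma)$, the first two cases ($A\setminus S\neq\emptyset$ or $B\setminus T\neq\emptyset$) are also fine: the set of $A$-endpoints (resp.\ $B$-endpoints) of the minimum edge cut is a vertex cut of size at most $k$. The problem is the residual case, which you explicitly leave open (``I expect this sub-case to be the main technical obstacle\dots''). That is not a minor loose end: it is the genuinely hard step of Whitney's inequality, and it really does occur for non-complete graphs (e.g.\ $C_4$ with the edge cut consisting of two opposite edges has $A=S$ and $B=T$). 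A proof that defers exactly this step has not proved the theorem.

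Two concrete issues to fix. First, your reduction to ``$\Gamma$ complete'' is too coarse: what matters in the residual case is whether every vertex of $A$ is adjacent to every vertex of $B$. If so, then $k=|F|\ge|A|\,|B|\ge|A|+|B|-1=n-1\ge\kappa(\Gamma)$ and you are done without invoking completeness of $\Gamma$. If not, you must choose the non-adjacent pair with $x\in A$ and $y\in B$ (your ``fix two non-adjacent vertices $u$ and $v$'' does not guarantee they lie on opposite sides, and the construction below needs that). Second, the ``careful selection from neighborhoods'' has a standard explicit form: take
$S=\{\,b\in B: xb\in E(\Gamma)\,\}\cup\{\,a\in A\setminus\{x\}: a\text{ has a neighbour in }B\,\}$.
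Then $S$ separates $x$ from $y$ (any $x$--$y$ path must either step directly from $x$ into $B$, hitting the first part of $S$, or pass through some other vertex of $A$ before crossing into $B$, hitting the second part), and $|S|\le k$ because the map sending $b$ in the first part to the edge $xb\in F$ and $a$ in the second part to a chosen edge of $F$ incident to $a$ is injective into $F$. With this the argument closes; as written, it does not.
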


Let $G$ be a group and $p$ be a prime number. The \emph{order} $o(a)$ \emph{of an element} $a$ in $G$ is the cardinality of the subgroup generated by $a$. If $|G|=p^n$ for some prime $p$, then $G$ is called a \emph{$p$-group}. The set of orders of all the elements of $G$ is denoted by $\pi_G$. The \emph{exponent} of $G$ is defined as the least common multiple of the orders of all elements of $G$ and it is denoted by ${exp}(G)$. If $G$ contains an element whose order is equal to ${exp}(G)$, then $G$ is called the \emph{group of full exponent}.
  A cyclic subgroup of a group $G$ is called a \emph{maximal cyclic subgroup} if it is not properly contained in any cyclic subgroup of $G$. The set of all maximal cyclic subgroups of $G$ is denoted by $\mathcal{M}(G)$. For $n \geq 3$, the \emph{dihedral group} ${D}_{2n}$ of order $2n$ is defined as ${D}_{2n} = \langle x, y  :  x^{n} = y^2 = e,  xy = yx^{-1} \rangle $. Note that the group $D_{2n}$ has one maximal cyclic subgroup $M= \langle x \rangle$ of order $n$ and $n$ maximal cyclic subgroups $M_i = \langle x^iy\rangle$, where $1\le i \le n$, of order $2$. 
For $n \geq 2$, the \emph{generalized quaternion group} ${Q}_{4n}$ of order $4n$ is defined as ${Q}_{4n} = \langle a, b  :  a^{2n}  = e, a^n= b^2, ab = ba^{-1} \rangle$.  Observe that the group ${Q}_{4n}$ has one maximal cyclic subgroup $M=\langle a \rangle$ of order $2n$ and $n$ maximal cyclic subgroups $M_i = \langle a^ib\rangle$, where $1\le i \le n$, of order $4$. 
For $n \geq 2$, the \emph{semidihedral group} $SD_{8n}$ of order $8n$ is defined as $SD_{8n} = \langle a, b  :  a^{4n} = b^2 = e,  ba = a^{2n -1}b \rangle$. 
Note that the group $SD_{8n}$ has one maximal cyclic subgroup $M = \langle x\rangle$ of order $4n$, $2n$ maximal cyclic subgroups  $M_i = \langle a^{2i}b \rangle = \{e, a^{2i}b\}$, where $1\leq i \leq 2n$, of order $2$ and $n$ maximal cyclic subgroups $ M_j =  \langle a^{2j + 1}b \rangle = \{e, a^{2n}, a^{2j +1}b, a^{2n + 2j +1}b\} $, where $1\leq j \leq n$, of order $4$. The group of all permutations from a set of $n$ elements to itself is denoted by $S_n$.

 \begin{theorem}[{\rm \cite[p. 193]{b.dummit1991abstract}}]{\label{nilpotent}} Let $G$ be a finite group. Then the following statements are equivalent:
\begin{enumerate}
    \item[(i)] G is a nilpotent group.
    \item[(ii)] Every Sylow subgroup of $G$ is normal.
    \item[(iii)] $G$ is direct product of its Sylow subgroups.
    \item[(iv)] For $x,y \in G$, $x$ and $y$ commute whenever $o(x)$ and $o(y)$ are relatively prime.
\end{enumerate}
    \end{theorem}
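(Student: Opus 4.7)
The plan is to arrange the four statements into a cyclic chain, so I would prove (i) $\Rightarrow$ (ii) $\Rightarrow$ (iii) $\Rightarrow$ (iv) $\Rightarrow$ (ii), and separately (iii) $\Rightarrow$ (i) to close the equivalence among the first three. For (i) $\Rightarrow$ (ii) I would invoke the \emph{normalizer condition} for nilpotent groups: every proper subgroup is strictly contained in its normalizer. Applied to $N_G(P)$ for a Sylow $p$-subgroup $P$, a standard Frattini argument (using that $P$ is characteristic in $N_G(P)$) gives $N_G(N_G(P))=N_G(P)$, and the normalizer condition then forces $N_G(P)=G$, so $P\trianglelefteq G$.

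For (ii) $\Rightarrow$ (iii), I would enumerate the distinct prime divisors $p_1,\ldots,p_r$ of $|G|$ with their (normal) Sylow subgroups $P_1,\ldots,P_r$ and show by induction on $i$ that $P_1 P_2\cdots P_i$ is a subgroup of order $\prod_{j\le i}|P_j|$, using normality together with the pairwise coprimality of the orders. At $i=r$ the product has order $|G|$, which yields the internal direct product. Then (iii) $\Rightarrow$ (i) is immediate because finite $p$-groups are nilpotent and nilpotency is closed under finite direct products. The implication (iii) $\Rightarrow$ (iv) is a direct check: writing $x=(x_1,\ldots,x_r)$ and $y=(y_1,\ldots,y_r)$ with $x_i,y_i\in P_i$, coprimality of $o(x)=\mathrm{lcm}(o(x_i))$ and $o(y)=\mathrm{lcm}(o(y_i))$ forces the nonidentity components of $x$ and $y$ to appear at disjoint indices, whence $x$ and $y$ commute.

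I expect the main obstacle to be (iv) $\Rightarrow$ (ii). My proposed route is as follows. Fix a Sylow $p$-subgroup $P$ and, for every other prime divisor $q$ of $|G|$, fix a Sylow $q$-subgroup $Q_q$. Any element of $Q_q$ has order coprime to any element of $P$, so condition (iv) gives $Q_q\subseteq C_G(P)\subseteq N_G(P)$. Therefore $H:=\langle P,Q_{q_1},\ldots,Q_{q_k}\rangle$ is contained in $N_G(P)$. By Lagrange each of the pairwise coprime prime powers $|P|,|Q_{q_1}|,\ldots,|Q_{q_k}|$ divides $|H|$, and their product equals $|G|$, so $|H|=|G|$. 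Hence $N_G(P)=G$ and $P$ is normal, closing the cycle. The only delicate point is the arithmetic observation that pairwise coprime integers each dividing $|H|$ have their product dividing $|H|$; this is immediate from the theory of least common multiples, so no serious obstruction arises.
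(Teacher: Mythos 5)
Your proof is correct: the cycle (i)~$\Rightarrow$~(ii)~$\Rightarrow$~(iii)~$\Rightarrow$~(iv)~$\Rightarrow$~(ii) together with (iii)~$\Rightarrow$~(i) does establish all four equivalences, and each step (the normalizer condition plus the Frattini-type identity $N_G(N_G(P))=N_G(P)$, the coprimality/Lagrange count for (iv)~$\Rightarrow$~(ii), and the componentwise argument for (iii)~$\Rightarrow$~(iv)) is sound. Note, however, that the paper does not prove this statement at all --- it is quoted as a standard background result from Dummit and Foote --- so there is no in-paper argument to compare against; your write-up is essentially the standard textbook proof of this classical characterization of finite nilpotent groups.
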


\begin{theorem}[{\rm\cite[Theorem 4]{powerregularity}}] \label{power graph regularity} Let $G $ be a finite group. The reduced power graph $\mathcal{P}^*(G)$ is regular if and only if $G$ is isomorphic to the cyclic $ p$-group or $exp(G) = p$, where $p$ is prime.
\end{theorem}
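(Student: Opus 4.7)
My plan is to prove both directions via a careful degree analysis in $\mathcal{P}^*(G)$.

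For sufficiency, if $G$ is a cyclic $p$-group its subgroups form a chain, so every pair of nonidentity elements satisfies a power relation and $\mathcal{P}^*(G)$ is complete, hence regular. If $\exp(G)=p$, every nonidentity element has order $p$ and two such elements are adjacent if and only if they generate the same subgroup of order $p$, so $\mathcal{P}^*(G)$ is a disjoint union of $(p-1)$-cliques and is $(p-2)$-regular.

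For necessity, assume $\mathcal{P}^*(G)$ is $d$-regular. The first key step is to observe that for any generator $g$ of any maximal cyclic subgroup $M$, maximality prevents $\langle g \rangle \subsetneq \langle y\rangle$ for $y\notin M$, so $\deg(g)=|M|-2$. Regularity therefore forces every maximal cyclic subgroup to have the same size $n:=d+2$, so $\exp(G)=n$. A parallel count for an element $h$ of prime order $p$ gives $\deg(h)=|U_h|-1$, where $U_h=\{y\in G:h\in\langle y\rangle\}$, so $|U_h|=n-1$. In the cyclic case $G$ is the unique maximal cyclic subgroup, hence $|G|=n$; if $|G|$ had two distinct prime factors then for $h$ of smallest prime order $p$ one computes $|U_h|=|G|-|G|/p^a\le |G|-2$, contradicting $|U_h|=|G|-1$, so $G$ is forced to be a cyclic $p$-group.

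Assume now that $G$ is non-cyclic. If $n=p^k$ with $k\ge 2$, then $G$ is a $p$-group of exponent $p^k$, and for $h=g^{p^{k-1}}$ every nonidentity element of $M=\langle g\rangle$ lies in $U_h$, forcing $U_h \subseteq M$ and so each order-$p$ element belongs to a unique maximal cyclic subgroup; distinct maximal cyclic subgroups therefore meet only in $\{e\}$. Choosing a central $z$ of order $p$ (which lies in its unique---and hence normal---maximal cyclic subgroup $M_z$) and any generator $b$ of some maximal cyclic subgroup different from $M_z$, centrality and $\langle z\rangle \cap \langle b\rangle = \{e\}$ give $o(zb)=p^k$, so $\langle zb \rangle$ is another maximal cyclic subgroup; but $(zb)^{p^{k-1}} = b^{p^{k-1}} \ne e$ then lies in $\langle zb\rangle \cap \langle b\rangle$, contradicting trivial intersection. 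Hence $k=1$ and $\exp(G)=p$. The remaining subcase---$G$ non-cyclic with $n$ having at least two distinct prime factors---I expect to be the main obstacle, since $|U_h\setminus M|=n/p^a-1$ is positive and so does not immediately preclude the existence of further maximal cyclic subgroups through $h$. My approach here will be to simultaneously impose the constraint $|U_{h'}|=n-1$ for $h'$ of another prime order $q\mid n$, as well as for an element of order $pq$ in $M$, and thereby extract an arithmetic incompatibility among the counts of maximal cyclic subgroups meeting along various cyclic subgroups of $M$.
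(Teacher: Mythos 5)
The paper does not actually prove this statement; it imports it verbatim from Alireza--Ahmad--Abbas (\cite[Theorem~4]{powerregularity}), so there is no in-paper proof to compare against and your attempt has to be judged on its own. What you have written is correct as far as it goes: the sufficiency direction is right (complete graph in the cyclic $p$-group case, disjoint $(p-1)$-cliques in the exponent-$p$ case), the degree formulas $\deg(g)=|M|-2$ for a generator of a maximal cyclic subgroup and $\deg(h)=|U_h|-1$ for $h$ of prime order are both correct, and they do force all maximal cyclic subgroups to share a common order $n$. Your treatment of the cyclic case and of the non-cyclic case with $n=p^k$ (via $U_h=M\setminus\{e\}$, trivial pairwise intersections of maximal cyclics, and the element $zb$ with $z$ central of order $p$) is sound.

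The genuine gap is the case you yourself flag: $G$ non-cyclic with $n$ divisible by at least two distinct primes. This is not a loose end but the substantive half of the necessity direction --- it is exactly what rules out, say, a hypothetical regular example in which every maximal cyclic subgroup has order $pq$ --- and for it you offer only a plan (``impose $|U_{h'}|=n-1$ for a second prime and for an element of order $pq$ and extract an arithmetic incompatibility'') with no argument. As you note, the inequality $|U_h\setminus M|=n/p^a-1>0$ no longer yields a contradiction by itself, so something new is needed. The case is closable by a counting argument in the spirit you describe, but aimed at a different element: for a prime $p\mid n$ and a generator $g$ of a maximal cyclic subgroup, the element $v=g^p$ has order $n/p$, every $y$ with $v\in\langle y\rangle$ has order $n/p$ or $n$, and the degree equation $\deg(v)=n-2$ forces the number $t_p$ of maximal cyclic subgroups containing $v$ to equal $\prod_{q\mid n,\,q\neq p}\frac{q}{q-1}$; when $n$ has at least two prime divisors one can choose $p$ so that this product has odd numerator and even denominator in lowest terms, hence is not an integer. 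Until an argument of this kind is actually carried out, the proof of necessity is incomplete.
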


 \begin{remark}\label{dominating enhanced}
     Let $G$ be a finite group. Then $x$ is a dominating vertex of the enhanced power graph $\pgh$ if and only if $x$ lies in every maximal cyclic subgroup of $G$.
 \end{remark}

\begin{theorem}[{\rm\cite[Theorem 4.1]{a.parveen2022}}]\label{regularity enhanced} Let $G$ be a finite group. Then $\mathcal{P}^*_E(G)$ is regular if and only if one of the following holds:
\begin{enumerate}
    \item [(i)]$G$ is a cyclic group.
    \item [(ii)]$|M_i|=|M_j|$ and $M_i \cap M_j= \{e\}$, where $M_i,M_j\in \mathcal{M}(G)$.
\end{enumerate}
    
\end{theorem}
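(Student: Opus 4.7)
The plan is to read off vertex degrees in $\mathcal{P}^{*}_E(G)=\mathcal{P}_E(G)-\{e\}$ directly from the family $\mathcal{M}(G)$ of maximal cyclic subgroups. For any $x\in G$, set $\mathcal{M}_x=\{M\in\mathcal{M}(G):x\in M\}$; since two distinct vertices are adjacent in $\mathcal{P}_E(G)$ exactly when they share a maximal cyclic subgroup, the neighborhood of a non-identity $x$ in $\mathcal{P}_E(G)$ is $\bigl(\bigcup_{M\in\mathcal{M}_x}M\bigr)\setminus\{x\}$, and because $e$ is adjacent to every other vertex we obtain
\[
\deg_{\mathcal{P}^{*}_E(G)}(x)\;=\;\Bigl|\bigcup_{M\in\mathcal{M}_x}M\Bigr|-2.
\]
Both directions of the proof will hinge on this identity.

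For sufficiency, Case~(i) is immediate: if $G$ is cyclic then $G\in\mathcal{M}(G)$, so $\mathcal{P}_E(G)$ is complete and $\mathcal{P}^{*}_E(G)$ is complete on $|G|-1$ vertices. In Case~(ii), the trivial intersection hypothesis forces every non-identity $x$ to belong to a unique maximal cyclic subgroup $M$, so the displayed formula collapses to $\deg_{\mathcal{P}^{*}_E(G)}(x)=|M|-2=m-2$, where $m$ is the common order, independent of $x$.

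For necessity, assume $\mathcal{P}^{*}_E(G)$ is regular and $G$ is non-cyclic. The key observation is that a generator $y$ of any $M\in\mathcal{M}(G)$ lies in no other maximal cyclic subgroup: if $y\in M'$ then $M=\langle y\rangle\subseteq M'$, forcing $M=M'$ by maximality. Hence $\mathcal{M}_y=\{M\}$ and $\deg_{\mathcal{P}^{*}_E(G)}(y)=|M|-2$, so regularity compels $|M|$ to take a common value $m$ across all $M\in\mathcal{M}(G)$. Next, for an arbitrary non-identity $x$, regularity and the formula yield $|\bigcup_{M\in\mathcal{M}_x}M|=m$; if $|\mathcal{M}_x|\ge 2$, selecting distinct $M_i,M_j\in\mathcal{M}_x$ gives
\[
m\;\ge\;|M_i\cup M_j|\;=\;2m-|M_i\cap M_j|,
\]
forcing $|M_i\cap M_j|\ge m=|M_i|$ and thus $M_i=M_j$, a contradiction. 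Therefore $|\mathcal{M}_x|=1$ for every non-identity $x$, which is precisely the trivial pairwise intersection condition; together with the equal-order conclusion from the previous step this is all of (ii).

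I do not expect a substantial obstacle: the argument is essentially a clean degree count whose only real ingredient is the remark that a generator of a maximal cyclic subgroup lies in no other. The mild point to verify is that in the non-cyclic case every $M\in\mathcal{M}(G)$ has $|M|\ge 2$, so that a generator distinct from $e$ actually exists, which is automatic once $G$ is assumed non-trivial.
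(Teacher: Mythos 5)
The paper gives no proof of this statement at all --- it is imported verbatim from \cite[Theorem 4.1]{a.parveen2022} --- so there is nothing in-paper to compare your argument against; what you have written is a correct, self-contained proof. Your degree formula $\deg_{\mathcal{P}^{*}_E(G)}(x)=\bigl|\bigcup_{M\in\mathcal{M}_x}M\bigr|-2$ is right (every cyclic subgroup of a finite group sits inside a maximal one, so adjacency in $\mathcal{P}_E(G)$ is exactly co-membership in some $M\in\mathcal{M}(G)$), the observation that a generator of a maximal cyclic subgroup lies in no other member of $\mathcal{M}(G)$ correctly pins the common degree at $m-2$, and the inclusion--exclusion step $m\ge 2m-|M_i\cap M_j|$ cleanly forces $|\mathcal{M}_x|=1$ for every $x\neq e$, which is equivalent to pairwise trivial intersections. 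The one point worth flagging is that the paper never defines $\mathcal{P}^{*}_E(G)$: you took it to be $\mathcal{P}_E(G)-\{e\}$, and this is in fact the only reading under which the theorem is true and under which the paper's application of it in Corollary \ref{Minimally edge connected} (via Theorem \ref{graph}, which needs $\Gamma-\{e\}$ regular) goes through; under the alternative convention of deleting \emph{all} dominating vertices, $Q_8$ would be a counterexample, since removing $\{e,-1\}$ leaves a perfect matching on $\{\pm i,\pm j,\pm k\}$, which is regular even though neither (i) nor (ii) holds. Stating that convention explicitly at the outset would make the proof airtight.
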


\begin{theorem}[{\rm\cite[Theorem 4.5]{a.parveen2022}}]\label{nilpotent regularity}
    Let $G$ be a non-cyclic nilpotent group. Then $\mathcal{P}^*_E(G)$ is regular if and only if $G$ is a $p$-group with exponent $p$.
\end{theorem}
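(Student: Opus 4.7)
The plan is to invoke Theorem \ref{regularity enhanced} in both directions. The reverse implication is immediate: in a non-cyclic $p$-group of exponent $p$, every non-identity element has order $p$, so the maximal cyclic subgroups of $G$ are precisely the subgroups of order $p$, and any two distinct ones intersect trivially; condition (ii) of Theorem \ref{regularity enhanced} therefore holds, so $\mathcal{P}^*_E(G)$ is regular.

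For the forward direction, suppose $G$ is non-cyclic nilpotent with $\mathcal{P}^*_E(G)$ regular, so by Theorem \ref{regularity enhanced} all $M \in \mathcal{M}(G)$ have a common order $N$ and intersect pairwise trivially. I would first reduce to the case that $G$ is a $p$-group. Since $G$ is nilpotent, $G = P_1 \times \cdots \times P_k$ is the direct product of its Sylow subgroups, and a quick check shows $\mathcal{M}(G) = \{C_1 \times \cdots \times C_k : C_i \in \mathcal{M}(P_i)\}$. If $k \geq 2$, then because $G$ is non-cyclic some $P_j$ (say $P_1$) is non-cyclic, yielding distinct $C_1, C_1' \in \mathcal{M}(P_1)$; fixing any $C_i \in \mathcal{M}(P_i)$ for $i \geq 2$, the two maximal cyclic subgroups $C_1 \times C_2 \times \cdots \times C_k$ and $C_1' \times C_2 \times \cdots \times C_k$ share the subgroup $\{e\} \times C_2 \times \cdots \times C_k$, which is nontrivial since $|C_2| > 1$, contradicting (ii). Hence $G$ is a $p$-group. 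Writing $|G| = p^n$ and $N = p^m$, the partition $G \setminus \{e\} = \bigsqcup_i (M_i \setminus \{e\})$ gives $p^n - 1 = r(p^m - 1)$, forcing $m \mid n$; non-cyclicity rules out $r = 1$, so $n \geq 2m$.

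The heart of the proof is to show $m = 1$. Suppose for contradiction $m \geq 2$. Take $z \in Z(G)$ with $o(z) = p$, and let $M_z = \langle a\rangle$ be the unique maximal cyclic subgroup containing $z$; since any conjugate of $M_z$ also contains the central element $z$, uniqueness gives $M_z \trianglelefteq G$. The conjugation action yields $G/C_G(a) \hookrightarrow \operatorname{Aut}(M_z) \cong (\mathbb{Z}/p^m)^*$; as $G/C_G(a)$ is a $p$-group and the Sylow $p$-subgroup of $(\mathbb{Z}/p^m)^*$ has order $p^{m-1}$, we get $[G:C_G(a)] \leq p^{m-1}$, hence $|C_G(a)| \geq p^{n-m+1} \geq p^{m+1} > |M_z|$. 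Pick $h \in C_G(a) \setminus M_z$: then $\langle h, a\rangle$ is abelian, strictly contains $M_z$, and is not cyclic (else $M_z$ would not be maximal cyclic). Since $a$ attains the maximal element order in $\langle h, a\rangle$, the cyclic subgroup $\langle a\rangle$ is a direct summand, so $\langle h, a\rangle = \langle a\rangle \oplus \langle k\rangle$ with $o(k) = p^\ell$, $\ell \geq 1$. Put $k' = k$ if $\ell < m$ and $k' = k^p$ if $\ell = m$; in either case $k' \neq e$ and $o(k') = p^{\ell'}$ with $\ell' < m$. The element $ak'$ has order $p^m$, so $\langle ak'\rangle \in \mathcal{M}(G)$ is distinct from $\langle a\rangle$, while a direct calculation in $\langle a\rangle \oplus \langle k\rangle$ gives $|\langle a\rangle \cap \langle ak'\rangle| = p^{m-\ell'} > 1$, contradicting (ii). I expect the main obstacle to be producing the element $h \in C_G(a) \setminus M_z$: this is precisely where the automorphism-group bound $[G:C_G(a)] \leq p^{m-1}$ meets the counting bound $n \geq 2m$. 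Once $h$ is available, the standard direct-summand structure of a finite abelian $p$-group supplies the second maximal cyclic subgroup and the required nontrivial intersection.
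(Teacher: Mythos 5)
Your proof is correct, but there is nothing in this paper to compare it against: Theorem \ref{nilpotent regularity} is an imported result, quoted verbatim from \cite[Theorem 4.5]{a.parveen2022}, and the present manuscript gives no proof of it (just as it gives none for Theorem \ref{regularity enhanced}, on which both the cited source and your argument rest). Taken on its own terms, your derivation from Theorem \ref{regularity enhanced} is complete. The reduction to a $p$-group via $\mathcal{M}(P_1\times\cdots\times P_k)=\{C_1\times\cdots\times C_k: C_i\in\mathcal{M}(P_i)\}$ is valid because the Sylow factors have coprime orders, and a non-cyclic nilpotent group must have a non-cyclic Sylow factor, which supplies the two maximal cyclic subgroups with nontrivial common piece $\{e\}\times C_2\times\cdots\times C_k$. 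The counting step $p^n-1=r(p^m-1)$ with $r\ge 2$ correctly forces $n\ge 2m$. In the core step, the two facts you lean on both hold: the Sylow $p$-subgroup of $\operatorname{Aut}(\mathbb{Z}_{p^m})$ has order $p^{m-1}$ for every prime $p$ including $p=2$ (there the full automorphism group already has order $2^{m-1}$), so $[G:C_G(a)]\le p^{m-1}$ and $|C_G(a)|\ge p^{m+1}>|M_z|$; and in the finite abelian $p$-group $\langle h,a\rangle$ the element $a$ of maximal order generates a direct summand, with cyclic complement $\langle k\rangle\cong\langle h,a\rangle/\langle a\rangle$. Your choice of $k'$ guarantees $1\le \ell'<m$, so $o(ak')=p^m$ makes $\langle ak'\rangle$ a second maximal cyclic subgroup meeting $\langle a\rangle$ in $\langle a^{p^{\ell'}}\rangle\neq\{e\}$, contradicting condition (ii). This is a clean, self-contained argument; whether it coincides with the proof in \cite{a.parveen2022} cannot be judged from this document.
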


\section{Main Results}
The main results of the mansuscript are presented in this section. We classify finite groups such that their enhanced power graphs and order superpower graphs, respectively, are minimally edge connected. Also, we classify all the finite groups such that their enhanced power graphs (cf. Theorem \ref{pgh minimally connected}) and order superpower graphs (cf. Theorem \ref{sg-minimallyconnected}) are minimally connected. Then we determine all the finite nilpotent groups such that the minimum degree and the vertex connectivity of their order superpower graphs are equal (cf. Theorem \ref{min degree = vertex connectivity of SG}). We begin with the following theorem which provides a necessary and sufficient condition for an arbitrary graph $\Gamma$ having dominating vertices to be minimally edge connected.  
\begin{theorem}{\label{graph}}
    Let $\Gamma$ be a non-complete graph with a dominating vertex $x$. Then $\Gamma $ is minimally edge connected if and only if $x$ is the only dominating vertex in $\Gamma $ and $\Gamma -\{x\}$ is a regular graph.
\end{theorem}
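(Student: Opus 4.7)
The key observation is that $x$ being dominating forces $\mathrm{diam}(\Gamma) \leq 2$, so Theorem~\ref{ch1-edgecon.mindeg} gives $\kappa'(\Gamma) = \delta(\Gamma) =: d$, and non-completeness gives $d \leq n - 2$ where $n = |V(\Gamma)|$. I will also use the following reformulation: $\Gamma$ is minimally edge connected if and only if every edge of $\Gamma$ lies in some minimum edge cut, since $\kappa'(\Gamma - \epsilon) = \kappa'(\Gamma) - 1$ happens precisely when removing $\epsilon$ shrinks some minimum cut by one.

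For the sufficiency direction, assume $x$ is the unique dominating vertex and $\Gamma - x$ is $r$-regular. Every $w \neq x$ then satisfies $\deg_\Gamma(w) = r + 1$; uniqueness of $x$ together with non-completeness forces $r + 1 < n - 1$, so $d = r + 1$ and every non-$x$ vertex is a minimum-degree vertex. For an arbitrary edge $\epsilon$, at least one endpoint $u$ is distinct from $x$; the star-cut at $u$ is then a minimum edge cut of size $d$ containing $\epsilon$, so $\kappa'(\Gamma - \epsilon) \leq d - 1$, and combined with the general bound $\kappa'(\Gamma - \epsilon) \geq \kappa'(\Gamma) - 1$ we get equality.

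The main obstacle is the necessity direction: assuming $\Gamma$ is minimally edge connected, I must show $\Gamma - x$ is regular (the uniqueness of $x$ as a dominating vertex will then follow automatically, since the common degree $d$ of the non-$x$ vertices satisfies $d < n - 1$). Suppose, for contradiction, some $w \neq x$ satisfies $\deg_\Gamma(w) \geq d + 1$. I claim the edge $xw$ lies in no minimum edge cut of $\Gamma$, which contradicts the reformulation above. Let $(A, B)$ be any edge cut of $\Gamma$ containing $xw$, with $x \in A$ and $w \in B$. Since $x$ is adjacent to every vertex of $B$, the $|B|$ edges from $x$ already lie in the cut, so a minimum cut (of size $d$) requires $|B| \leq d$. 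Writing $e(B)$ for the number of edges of $\Gamma$ inside $B$, a degree count gives
\[
2\,e(B) + d \;=\; \sum_{b \in B} \deg_\Gamma(b) \;\geq\; d\,|B| + 1,
\]
where the $+1$ reflects $\deg_\Gamma(w) \geq d + 1$. Combining with $e(B) \leq \binom{|B|}{2}$ yields $(|B| - 1)(|B| - d) \geq 1$, which forces $|B| \geq d + 1$, contradicting $|B| \leq d$. Therefore no such cut exists, and every vertex $w \neq x$ satisfies $\deg_\Gamma(w) = d$, making $\Gamma - x$ $(d-1)$-regular.
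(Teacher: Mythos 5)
Your proof is correct, but it follows a genuinely different route from the paper's. You work throughout with the standard reformulation that $\Gamma$ is minimally edge connected if and only if every edge lies in some minimum edge cut, together with the fact that minimum edge cuts are edge boundaries $[A,B]$ of vertex partitions; the paper never makes either fact explicit and instead argues directly with paths. For sufficiency, your observation that the star at any non-$x$ endpoint is already a minimum cut replaces the paper's two-case analysis (one case via the diameter-$2$ bound of Theorem~\ref{ch1-edgecon.mindeg}, the other via an ad hoc path-rerouting argument through $x$). For necessity, your double count $2e(B)+d=\sum_{b\in B}\deg(b)\ge d|B|+1$ against $e(B)\le\binom{|B|}{2}$ and $|B|\le d$ shows cleanly that an edge from $x$ to a vertex of degree exceeding $\delta(\Gamma)$ can lie in no minimum cut, whereas the paper lower-bounds a cut of $\Gamma-\epsilon_1$ by pairing each neighbour $u$ of $z$ with one of the edges $\{uz\},\{ux\}$; your version also absorbs the uniqueness of the dominating vertex as an immediate corollary of regularity plus non-completeness rather than treating it as a separate case. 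The trade-off is that your argument leans on two (standard, easily verified) structural facts about edge cuts, while the paper's is nominally more self-contained; in exchange yours is shorter and avoids the loose points in the paper's necessity argument (e.g.\ the unexplained choice of $y$ as a minimum-degree vertex and the role of the set $S'$).
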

\begin{proof}
    Suppose $\Gamma$ is minimally edge connected. Since $diam(\Gamma) = 2$, we get $\kappa'(\Gamma) = \delta(\Gamma) = deg(z)$, for some $z \in V(\Gamma)$. If possible, assume that $\Gamma $ has two dominating vertices $x$ and $y$. Consider the edge $\epsilon =\{xy\}$. Notice that $deg(x)= deg(y) > deg(z)$. After removing the edge $\epsilon$, we obtain that $deg(x)=deg(y) \geq deg(z)$. It follows that $\delta (\Gamma -\epsilon) = deg(z) = \delta (\Gamma)$. Note that $diam(\Gamma -\epsilon) =2$. Consequently, we have $\kappa'(\Gamma -\epsilon) = \delta(\Gamma - \epsilon) = \delta(\Gamma)$, a contradiction. Thus, $x$ is the only dominating vertex in $\Gamma$. Now suppose that the graph $\Gamma -\{x\}$ is not regular. Then there exist two vertices $y,z \in V(\Gamma)$ such that $deg(z) > deg(y)$. Consider the edge $\epsilon_1 =\{xz\}$ in $\Gamma$. Let $S$ be an edge cut-set of minimum size in $\Gamma -\epsilon_1$. Clearly, $|S| = \kappa'(\Gamma -\epsilon_1)$. Let $y_1$ and $y_2$ be two vertices in $(\Gamma -\epsilon_1)-S$ such that there exists no path between $y_1$ and $y_2$ in $(\Gamma -\epsilon_1)-S$. Since $\Gamma $ is minimally edge connected, we get $\kappa'(\Gamma) = |S|+1$. It follows that there exists a path $P_1 : y_1 \sim x_1 \sim \cdots \sim y_2$ from $y_1$ to $y_2$ in $\Gamma -S$. Observe that the edge $\{x z\}$ must belong to the path $P_1$. Otherwise, $P_1$ is a path between $y_1$ and $y_2$ in $(\Gamma - \epsilon_1) -S $, which is not possible. Consider the set $S'= \{ \{az\}: a\in V(\Gamma)-\{x\}\}$ of edges. Now, we claim that $|S'| \leq |S|$. Let $u \neq x \in N(z)$. Then by replacing $z \sim x$ by $z \sim u \sim x$ in the path $P_1$, we get a new path $P_2$ from $y_1$ to $y_2$ in $\Gamma- \epsilon_1$. Thus, corresponding to each vertex $u$ in  $N(z) \setminus \{x\}$, either $\{uz\}$ or $\{ ux\}$ must belong to $S$. Otherwise, $P_2$
 is a path between $y_1$ and $y_2$ in $(\Gamma -\epsilon )-S$, which is not possible. Thus, either $\{uz\}$ or $\{ux\}$ belongs to $S$. It implies that $|S'| \leq |S|$. Thus, $|S| \geq deg(z) -1 \geq deg(y) = \kappa'(\Gamma)$, which is not possible. Therefore, the graph $\Gamma -\{x\}$ is a regular.
 
Conversely, suppose that $x$ is the dominating vertex of $\Gamma$ and $\Gamma -\{x\}$ is a regular graph. Let $\epsilon =\{ y_1y_2\}$ be any edge in graph $\Gamma$. Now we have the following cases:\\
\textbf{Case-1:} $x \neq y_1,y_2$. In this case, $diam(\Gamma) = diam(\Gamma - \epsilon) =2$. Also, $\delta(\Gamma) = deg(y_1)$ and $\delta (\Gamma -\epsilon) =deg(y_1) -1$. It follows that $\kappa'(\Gamma) = \kappa'(\Gamma -\epsilon)+1$. Thus, $\Gamma$ is minimally edge connected.\\
\textbf{Case-2:} Either $y_1=x$ or $y_2 = x$. With no loss of generality, assume that $y_1 = x$. Then $\delta(\Gamma -\epsilon) = deg (y_2)-1$. We will show that $\kappa'(\Gamma -\epsilon) = deg(y_2) -1$. Let $S$ be an edge cut-set of minimum size in $\Gamma -\epsilon$. If possible, assume that $|S| < deg(y_2) -1$. Consider two vertices $z_1$ and $z_2$ such that there does not exist any path between $z_1$ and $z_2$ in $(\Gamma - \epsilon) -S$. Since $diam(\Gamma)= 2$, we have $\kappa'(\Gamma) = \delta( \Gamma) = deg(y_2)$. Thus, the graph $\Gamma -S$ is connected. Consequently, there exists a path $P: z_1 \sim u_1 \sim u_2 \sim \cdots \sim z_2$ between $z_1$ and $z_2$ in $\Gamma -S$. Observe that the edge $\{x y_2\}$ must belong to the path $P$. Otherwise, $P$ will become a path between $z_1$ and $z_2$ in $(\Gamma -\epsilon)-S $. For any $u \neq x \in N(y_2)$, we get a path $P_2: z_1 \sim u_1\sim u_2 \sim \cdots \sim x \sim u \sim y_2\sim \cdots \sim z_2$ in $\Gamma -\epsilon$. Thus, either the edge $\{xu\}$ or $\{uy_2\}$ belongs to $S$. It follows that $|S| \geq deg(y_2)-1$. Therefore, $|S| = deg(y_2) -1 = \kappa'(\Gamma -\epsilon)$. Hence, $\Gamma$ is minimally edge connected.
 \end{proof}

\begin{corollary}[{\rm\cite[Theorem 1.1]{PANDA20201}}]
    A finite group $G$ is a non-cyclic group of prime exponent if and only if the power graph $\mathcal{P}(G)$ is non-complete and minimally edge-connected.
\end{corollary}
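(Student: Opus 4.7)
The plan is to deduce this corollary directly from Theorem \ref{graph} together with Theorem \ref{power graph regularity}, using the fact that the identity $e$ is always a dominating vertex of $\mathcal{P}(G)$.

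First I would observe the elementary fact that $e$ is adjacent to every other vertex of $\mathcal{P}(G)$, so $e$ is a dominating vertex, and hence Theorem \ref{graph} applies whenever $\mathcal{P}(G)$ is non-complete. In the forward direction, assume $G$ is non-cyclic of prime exponent $p$. Non-cyclicity gives two elements $x,y$ of order $p$ generating distinct cyclic subgroups, so $x\not\sim y$ in $\mathcal{P}(G)$; in particular $\mathcal{P}(G)$ is non-complete and no non-identity element is dominating, so $e$ is the unique dominating vertex. Since $\exp(G)=p$, Theorem \ref{power graph regularity} says $\mathcal{P}^*(G)=\mathcal{P}(G)-\{e\}$ is regular. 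Theorem \ref{graph} then yields that $\mathcal{P}(G)$ is minimally edge connected.

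For the converse, suppose $\mathcal{P}(G)$ is non-complete and minimally edge connected. Since $e$ is a dominating vertex, Theorem \ref{graph} forces $e$ to be the only dominating vertex and $\mathcal{P}(G)-\{e\}=\mathcal{P}^*(G)$ to be regular. Theorem \ref{power graph regularity} then gives that either $G$ is a cyclic $p$-group or $\exp(G)=p$. The first option would make $\mathcal{P}(G)$ complete, contradicting the hypothesis, so $\exp(G)=p$. Finally, $G$ cannot be cyclic, since a cyclic group of prime exponent is the cyclic group of prime order $p$, whose power graph is complete. Hence $G$ is a non-cyclic group of prime exponent.

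I do not foresee any real obstacle here: the heavy lifting has already been done in Theorem \ref{graph} and Theorem \ref{power graph regularity}, and the only point that needs a line of justification is the identification of the dominating vertices of $\mathcal{P}(G)$ (namely, that only $e$ can be dominating once $\exp(G)=p$ and $G$ is non-cyclic), together with ruling out the cyclic case from the non-completeness hypothesis.
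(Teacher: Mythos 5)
Your proposal is correct and follows essentially the same route as the paper: the paper's proof is a one-line appeal to Theorem \ref{graph}, Theorem \ref{power graph regularity}, and a cited result of Cameron--Jafari on dominating vertices of power graphs. The only difference is that you supply a short direct argument identifying $e$ as the unique dominating vertex (and ruling out the cyclic case) where the paper cites that external result, which is a harmless substitution.
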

\begin{proof}
The result follows from Theorems \ref{power graph regularity}, \ref{graph} and, {\rm\cite[Theorem 4]{camerondominatingpower}}.    
\end{proof}

\begin{corollary}\label{Minimally edge connected}
    Let $G$ be a finite non-cyclic group. Then the enhanced power graph $\pgh$ is minimally edge connected if and only if $|M_{i}|=|M_{j}|$ and $M_{i} \cap M_{j} =\{e\}$, where $M_{i},M_{j} \in \mathcal{M}(G).$
    
\end{corollary}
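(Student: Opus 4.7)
The plan is to obtain this corollary as a direct application of Theorem \ref{graph} to the graph $\Gamma = \pgh$, using Remark \ref{dominating enhanced} to recognise dominating vertices and Theorem \ref{regularity enhanced} to recognise regularity of $\pgh-\{e\}=\mathcal{P}^*_E(G)$. The identity element $e$ lies in every cyclic subgroup, so $e$ is a dominating vertex of $\pgh$; on the other hand, since $G$ is non-cyclic, there exist two elements of $G$ that do not share a common cyclic overgroup, so $\pgh$ is non-complete. Thus the hypotheses of Theorem \ref{graph} are satisfied with $x=e$.

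For the forward direction, I would assume that $\pgh$ is minimally edge connected and invoke Theorem \ref{graph} with the dominating vertex $e$. This immediately yields two conclusions: $e$ is the only dominating vertex of $\pgh$, and $\pgh-\{e\}=\mathcal{P}^*_E(G)$ is regular. Since $G$ is non-cyclic, option (i) of Theorem \ref{regularity enhanced} is ruled out, so option (ii) must hold, i.e.\ $|M_i|=|M_j|$ and $M_i\cap M_j=\{e\}$ for all $M_i,M_j\in\mathcal{M}(G)$.

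For the converse, I would assume the condition on maximal cyclic subgroups and check both hypotheses of Theorem \ref{graph}. First, Theorem \ref{regularity enhanced}(ii) is exactly the given hypothesis, so $\mathcal{P}^*_E(G)=\pgh-\{e\}$ is regular. Second, I need to verify that $e$ is the \emph{only} dominating vertex. Since $G$ is non-cyclic, there are at least two distinct maximal cyclic subgroups $M_i,M_j\in\mathcal{M}(G)$; by hypothesis $M_i\cap M_j=\{e\}$, so no non-identity element can lie in every maximal cyclic subgroup, and by Remark \ref{dominating enhanced} the element $e$ is the unique dominating vertex of $\pgh$. With both hypotheses verified, Theorem \ref{graph} concludes that $\pgh$ is minimally edge connected.

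The only mildly delicate step is ensuring in the converse direction that the intersection condition on \emph{pairs} of maximal cyclic subgroups translates to the uniqueness of the identity as a dominating vertex; this is where the assumption that $G$ is non-cyclic (hence $|\mathcal{M}(G)|\geq 2$) is used. Everything else is essentially bookkeeping, so the proof should be short and should amount to citing Theorems \ref{graph}, \ref{regularity enhanced} and Remark \ref{dominating enhanced}.
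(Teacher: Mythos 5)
Your proposal is correct and follows exactly the paper's route: the paper's proof is a one-line citation of Remark \ref{dominating enhanced}, Theorem \ref{regularity enhanced} and Theorem \ref{graph}, and your argument simply spells out the details of that deduction (including the verification that $e$ is the unique dominating vertex in the converse direction, which the paper leaves implicit).
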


\begin{proof}
    The result holds by Remark \ref{dominating enhanced}, Theorem \ref{regularity enhanced} and Theorem \ref{graph}.
\end{proof}

\begin{corollary}
    Let $G$ be a finite non-cyclic nilpotent group. Then the enhanced power graph $\pgh$ is minimally edge-connected if and only if $G$ is a $p$-group with exponent $p$.
\end{corollary}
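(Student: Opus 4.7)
The plan is to chain together the preceding results and observe that, for a non-cyclic group, the combinatorial condition characterizing minimal edge connectedness of $\pgh$ (Corollary \ref{Minimally edge connected}) is precisely condition (ii) of Theorem \ref{regularity enhanced}. The assumption that $G$ is non-cyclic rules out condition (i) of Theorem \ref{regularity enhanced}, so minimal edge connectedness of $\pgh$ is equivalent to regularity of $\mathcal{P}^*_E(G)$. Under the added nilpotency hypothesis, Theorem \ref{nilpotent regularity} then pins this down to $G$ being a $p$-group of exponent $p$.

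Concretely, first I would invoke Corollary \ref{Minimally edge connected} to replace the hypothesis that $\pgh$ is minimally edge connected with the statement that every pair $M_i,M_j\in \mathcal{M}(G)$ satisfies $|M_i|=|M_j|$ and $M_i\cap M_j=\{e\}$. Since $G$ is non-cyclic, option (i) of Theorem \ref{regularity enhanced} is excluded, so this is equivalent to $\mathcal{P}^*_E(G)$ being regular. Finally, for nilpotent $G$, Theorem \ref{nilpotent regularity} rephrases this regularity as: $G$ is a $p$-group with $\mathrm{exp}(G)=p$. The converse is obtained by reading the same chain of equivalences backwards.

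There is no real obstacle here; the corollary is a bookkeeping consequence of results already in hand. The only point worth noting is that the equivalence in Corollary \ref{Minimally edge connected} is stated for an arbitrary finite non-cyclic group, so the nilpotency hypothesis is only used in the final step to replace the maximal-cyclic-subgroup condition with the cleaner arithmetic characterization provided by Theorem \ref{nilpotent regularity}.
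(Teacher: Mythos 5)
Your proposal is correct and follows essentially the same route as the paper, which simply combines Corollary \ref{Minimally edge connected} with Theorem \ref{nilpotent regularity}; your explicit intermediate step through Theorem \ref{regularity enhanced} (identifying the maximal-cyclic-subgroup condition with regularity of $\mathcal{P}^*_E(G)$) just spells out the link the paper leaves implicit.
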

\begin{proof}
   On combining Corollary \ref{Minimally edge connected} with Theorem \ref{nilpotent regularity}, the result holds. 
\end{proof}

\begin{corollary}
    If $G \in \{ D_{2n}, Q_{4n}, SD_{8n} \}$, then the enhanced power graph $\pgh$ is not minimally edge-connected.
\end{corollary}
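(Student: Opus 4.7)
The plan is to invoke Corollary \ref{Minimally edge connected}, which (for a finite non-cyclic group) characterizes minimally edge connected enhanced power graphs by two conditions on the collection $\mathcal{M}(G)$ of maximal cyclic subgroups: all elements of $\mathcal{M}(G)$ have the same order, and any two of them intersect trivially. So it suffices to show that for each of $D_{2n}$, $Q_{4n}$, $SD_{8n}$, at least one of these conditions fails, and I will simply read off the structure of $\mathcal{M}(G)$ already recorded in the preliminaries.

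The ``generic'' part of the argument is essentially immediate from a size count. For $D_{2n}$ with $n \geq 3$, the preliminaries list one maximal cyclic subgroup of order $n \geq 3$ and $n$ maximal cyclic subgroups of order $2$, so the orders already disagree. For $SD_{8n}$ with $n \geq 2$, the preliminaries list maximal cyclic subgroups of orders $4n$, $4$, and $2$, again visibly distinct. For $Q_{4n}$ with $n \geq 3$, there is a maximal cyclic subgroup of order $2n \geq 6$ and $n$ others of order $4$, so orders still disagree. In each of these cases the first condition of Corollary \ref{Minimally edge connected} fails, and we conclude that $\mathcal{P}_E(G)$ is not minimally edge connected.

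The one remaining case is $Q_8$ (that is, $Q_{4n}$ with $n=2$), where all maximal cyclic subgroups happen to have the same order $4$, so the first condition alone is not enough. Here I would argue that the second condition fails: from the relations $a^4=e$, $a^2=b^2$, and $ab=ba^{-1}$, every element $a^i b$ satisfies $(a^i b)^2 = a^2$, so the involution $a^2$ lies in $\langle a\rangle$ and in each $\langle a^i b\rangle$. Hence any two distinct maximal cyclic subgroups of $Q_8$ intersect in at least $\{e, a^2\}$, violating the trivial intersection condition. The main (very mild) obstacle is simply being careful with the boundary case $Q_8$, where the order-count obstruction disappears; beyond that, everything is a direct application of Corollary \ref{Minimally edge connected}.
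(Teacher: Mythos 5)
Your proposal follows the same route as the paper: invoke Corollary \ref{Minimally edge connected} and read off the structure of $\mathcal{M}(G)$ from the preliminaries. The paper's own proof, however, simply asserts that each of these groups has two maximal cyclic subgroups of different orders, which is false for $Q_8$ (the case $n=2$ of $Q_{4n}$), where $\langle a\rangle$ and every $\langle a^ib\rangle$ all have order $4$. Your treatment of that boundary case is the right fix: the computation $(a^ib)^2=a^2$ shows the unique involution $a^2$ lies in every maximal cyclic subgroup of $Q_8$, so the trivial-intersection condition of Corollary \ref{Minimally edge connected} fails instead. In short, your proof is correct, matches the paper's strategy, and actually repairs a small oversight in the paper's argument.
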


\begin{proof}
    For each of these groups, note that there exists two maximal cyclic subgroups $M_i$ and $M_j$ such that $|M_i| \neq |M_j| $. By Corollary \ref{Minimally edge connected}, the result holds.
\end{proof}

In the following theorem, we classify all the finite groups whose enhanced power graphs are minimally connected.
\begin{theorem}\label{pgh minimally connected}
    Let $G$ be a finite group. Then the enhanced power graph $\pgh$ is minimally connected if and only if one of the following holds:
    \begin{enumerate}
        \item[(i)] $G$ is cyclic.
        \item[(ii)] $G \cong \mathbb{Z}_{2} \times \mathbb{Z}_{2} \times \cdots \times \mathbb{Z}_{2}$.
    \end{enumerate}
\end{theorem}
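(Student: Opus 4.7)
The plan is to prove the biconditional by handling each direction separately. For sufficiency, the cyclic case gives $\pgh = K_{|G|}$, for which removing any edge drops the vertex connectivity from $|G|-1$ to $|G|-2$. For $G \cong \mathbb{Z}_2 \times \cdots \times \mathbb{Z}_2$, every non-identity element has order $2$, so distinct non-identity elements never share a cyclic subgroup; hence $\pgh$ is a star $K_{1, |G|-1}$ centered at $e$, and removing any edge isolates the corresponding leaf, giving $\kappa(\pgh - \epsilon) = 0 = \kappa(\pgh) - 1$.

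For necessity I would argue the contrapositive: assuming $G$ is non-cyclic and not elementary abelian of exponent $2$, I exhibit an edge $\epsilon$ with $\kappa(\pgh - \epsilon) = \kappa(\pgh)$. The key construction is to pick a maximal cyclic subgroup $M = \langle z \rangle$ with $|M| \geq 3$ (which exists since $G$ has an element of order at least $3$), and set $a = z$, $b = z^{-1}$, $\epsilon = \{a, b\}$. Since $G$ is non-cyclic, $M \subsetneq G$; and by maximality of $M$, both $a$ and $b$ lie only in $M$ among maximal cyclic subgroups, so $N(a) = M \setminus \{a\}$ and $N(b) = M \setminus \{b\}$ in $\pgh$, with $a \neq b$ because $o(z) \geq 3$.

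The argument then proceeds in two steps. First I would establish $\kappa(\pgh) \leq |M| - 2$ via the explicit cut $S = M \setminus \{a, b\}$: in $\pgh - S$ the pair $\{a, b\}$ is adjacent only within itself and hence separated from the nonempty subgraph induced on $G \setminus M$. Second, assume for contradiction that $S'$ is a cut of $\pgh - \epsilon$ with $|S'| < \kappa(\pgh) \leq |M| - 2$. Then $\pgh - S'$ must still be connected (else $S'$ would already be a smaller cut of $\pgh$), so $\epsilon$ is forced to be a bridge of $\pgh - S'$; blocking every length-two path $a - c - b$ with $c \notin S'$ forces $M \setminus \{a, b\} \subseteq S'$. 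But then in $\pgh - S'$ the pair $\{a, b\}$ is already a connected component, so connectivity of $\pgh - S'$ forces $V(\pgh) \setminus S' = \{a, b\}$, giving $|S'| = |G| - 2$. Combined with $|S'| \leq |M| - 3$ this yields $|G| \leq |M| - 1 < |M|$, contradicting $M \subseteq G$.

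The main obstacle is the second step, and the decisive insight is the choice of $a$ and $b$ as two distinct generators of the same maximal cyclic $M$. This is what guarantees the tight containment $N(a), N(b) \subseteq M$, which in turn forces any candidate smaller cut $S'$ of $\pgh - \epsilon$ to be so large that $|S'| = |G| - 2$, producing the contradiction with the upper bound from step one. A less careful choice, for instance one non-generator element, would leave open additional external neighbors and the argument would fail to close.
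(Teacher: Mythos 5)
Your proof is correct, and its core idea coincides with the paper's: both delete the edge joining an element of order at least $3$ to its inverse and exploit the fact that these two vertices have essentially the same neighbourhood in $\mathcal{P}_E(G)$, so that this edge is never essential for connectivity. The execution differs in a way worth noting. The paper works directly: it assumes $\mathcal{P}_E(G)$ is minimally connected, takes an \emph{arbitrary} element $x$ with $o(x)\geq 3$, and reroutes paths through $x^{-1}$, which forces a case split on whether the minimum cut of $\mathcal{P}_E(G)-\epsilon$ separates the pair $\{x,x^{-1}\}$ itself; that subcase is what produces the cyclic alternative, via completeness of the graph. You argue the contrapositive and, crucially, choose $x$ to be a \emph{generator of a maximal cyclic subgroup} $M$, which pins the neighbourhoods down exactly to $N(x)=M\setminus\{x\}$ and $N(x^{-1})=M\setminus\{x^{-1}\}$. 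This yields the explicit bound $\kappa(\mathcal{P}_E(G))\leq |M|-2$ from the cut $M\setminus\{x,x^{-1}\}$ (using non-cyclicity only to guarantee $G\setminus M\neq\emptyset$), and then any smaller cut of $\mathcal{P}_E(G)-\epsilon$ must contain all common neighbours $M\setminus\{x,x^{-1}\}$, an immediate counting contradiction. Your version thus avoids both the path-rerouting case analysis and the appeal to the characterization of groups with complete enhanced power graph in the necessity direction, at the small cost of first invoking the existence of a maximal cyclic subgroup of order at least $3$. Both arguments are sound; yours is tighter and more self-contained.
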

\begin{proof}
    Suppose $\pgh$ is minimally connected. Assume that $G$ has an element $x$ such that $o(x) \geq 3$. Let $\epsilon = \{xx^{-1}\}$ and $S$ be a vertex cut-set of minimum size in the graph $\pgh - \epsilon$. Thus, we have $\kappa(\pgh -\epsilon ) =|S|$. Since $\pgh$ is minimally connected, we obtain $\kappa(\pgh) = |S|+1$. Consequently, there exist two elements $y_{1}$ and $y_{2}$ of $G$ such that there does not exist any path between them in $(\pgh-\epsilon)-S$.\\
    \textbf{Case-1:} \emph{$y_{i} \notin \{x,x^{-1}\}$ for at least one $i$}.
Since $\kappa(\pgh) = |S|+1$, the graph $\pgh - S$ is connected. Therefore, in $\pgh -S$ there exists a path $P_{1}$: $y_{1} \sim x_{1}\sim x_{2}\sim \cdots \sim x_{k}\sim x\sim x^{-1} \sim \cdots \sim y_{2}$ (containing the edge $\epsilon$) between $y_{1}$ and $y_{2}$. Since $x_{k} \sim x$,  we get $x_{k} \sim x^{-1}$. Consequently, we have another path $P_{2}: y_{1} \sim x_{1} \sim x_{2} \sim \cdots \sim x_{k} \sim x^{-1} \sim \cdots \sim x_{t} \sim y_{2}$ in $(\pgh - \epsilon) - S$, which is not possible.\\
    \textbf{Case-2:} \emph{$y_i \in \{x,x^{-1}\}$ for $1 \leq i \leq 2$.}
If $G = S \cup \{x, x^{-1}\}$, then $|S| = |G|-2$. Therefore, $\kappa( \pgh) = |G| - 1$. It follows that $\pgh$ is complete and so $G$ is cyclic (cf. {\rm\cite[Theorem 2.4]{a.Bera2017}}). We may now suppose that $G \neq S \cup \{x, x^{-1}\}$.
    In this case $x$ and $x^{-1}$ are isolated vertices in $(\pgh -\epsilon) - S$. Otherwise, if $x \sim y$ for some $y \in G\setminus S$, then $y \sim x^{-1}$. Consequently, we get a path $x \sim y \sim x
    ^{-1}$, a contradiction. Thus $x$ and $x^{-1}$ are isolated vertices in $\pgh - S$. It implies that the graph $\pgh - S$ is disconnected and so $\kappa(\pgh) \leq |S|$, a contradiction. \\
     Thus, either $G$ is cyclic or $G$ does not contain any element of order greater than two. Later part implies that $G \cong \mathbb{Z}_{2} \times \mathbb{Z}_{2} \times \cdots \times \mathbb{Z}_{2}.$

    Conversely, If $G$ is cyclic, by Theorem 2.4 of \cite{a.Bera2017}, the enhanced power graph $\pgh $ is minimally connected. If $G \cong \mathbb{Z}_{2} \times \mathbb{Z}_{2} \times \cdots \times \mathbb{Z}_{2}$, then $\pgh$ is a star graph and so $\pgh$ is minimally connected.
\end{proof}

In what follows, we characterize finite groups whose order superpower graphs are minimally (edge) connected. First note that, 
if the group $G$ is of full exponent, then the order superpower graph $\sg$ has more than two dominating vertices. Also, it is well known that the order superpower graph of the group $G$ is complete if and only if $G$ is a $p$-group. Consequently, we obtain the following corollary.
\begin{corollary}{\label{sg full exponent}}
    Let $G$ be a finite group of full exponent. Then the order superpower graph $\sg$ is minimally edge connected if and only if $G$ is a $p$-group.
\end{corollary}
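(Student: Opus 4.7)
The plan is to invoke Theorem \ref{graph} (the general characterisation of minimally edge connected graphs possessing a dominating vertex) together with the two facts stated immediately before the corollary: namely, that $\sg$ is complete if and only if $G$ is a $p$-group, and that whenever $G$ has full exponent the graph $\sg$ has more than one dominating vertex. With these pieces in hand, the argument reduces to a two-line contrapositive and a routine check on complete graphs.

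For the reverse implication, suppose $G$ is a $p$-group. Then $\sg$ is the complete graph $K_{|G|}$, so $\kappa'(\sg) = |G| - 1$, and deleting any edge yields a graph of diameter $2$ with minimum degree $|G| - 2$. Theorem \ref{ch1-edgecon.mindeg} then gives $\kappa'(\sg - \epsilon) = |G| - 2 = \kappa'(\sg) - 1$, so $\sg$ is minimally edge connected. (The degenerate cases $|G| \leq 2$ are immediate.)

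For the forward implication, suppose $G$ is of full exponent and $\sg$ is minimally edge connected. I would argue by contrapositive: if $G$ were not a $p$-group, the first observation forces $\sg$ to be non-complete, while the full-exponent hypothesis supplies an element $g$ of order $\exp(G)$ whose order is divisible by (or divides, depending on the convention) the order of every other element, so $g$ is a dominating vertex; together with the identity $e$, this yields at least two dominating vertices in $\sg$. Theorem \ref{graph} then says such a non-complete graph cannot be minimally edge connected, contradicting the hypothesis. Hence $G$ must be a $p$-group.

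There is no real obstacle here: the statement is essentially a packaged corollary of Theorem \ref{graph}, and the only substantive ingredient is recognising that ``$G$ has full exponent'' is precisely the hypothesis producing a maximal element in the divisibility poset of element orders, which in turn supplies a second dominating vertex of $\sg$ beyond the identity, triggering the non-uniqueness clause in Theorem \ref{graph}.
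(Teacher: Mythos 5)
Your proposal is correct and follows exactly the route the paper intends: the two facts stated immediately before the corollary (an element of order $\exp(G)$ together with the identity gives at least two dominating vertices, and $\sg$ is complete iff $G$ is a $p$-group) combined with the non-uniqueness clause of Theorem \ref{graph} for the forward direction, plus the routine check that complete graphs are minimally edge connected for the converse. The paper gives no written proof beyond these observations, so your write-up is a faithful (and slightly more careful) expansion of the same argument.
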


The following proposition describes minimal edge connectedness of the order super power graphs of the groups of even order.
\begin{proposition}
    Let $G$ be a group of even order which is not a $p$-group. Then the order superpower graph $\sg$ is not minimally edge connected.
\end{proposition}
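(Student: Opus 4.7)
The plan is to apply Theorem \ref{graph} to the graph $\sg$. The identity $e$ is adjacent in $\sg$ to every other vertex of $G$, so $e$ is a dominating vertex of $\sg$ and hence the diameter of $\sg$ is at most $2$; consequently $\kappa'(\sg) = \delta(\sg)$ by Theorem \ref{ch1-edgecon.mindeg}. Fix an odd prime $q$ dividing $|G|$; by Cauchy's theorem there exist $x, y \in G$ with $o(x) = 2$ and $o(y) = q$, and since $\gcd(2,q) = 1$ we have $x \not\sim y$ in $\sg$, showing that $\sg$ is non-complete. Theorem \ref{graph} therefore reduces our task to proving that at least one of the following fails: $e$ is the only dominating vertex of $\sg$, and $\sg - \{e\}$ is regular.

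If $G$ has full exponent, Corollary \ref{sg full exponent} immediately delivers the conclusion (since $G$ is not a $p$-group). Equivalently, letting $n = \exp(G)$, the non-$p$-group hypothesis yields $n \geq 6$ and $\varphi(n) \geq 2$, and each of the at least $\varphi(n) \geq 2$ elements of order $n$ is a dominating vertex of $\sg$ (because $o(z) \mid n$ for every $z \in G$), so $\sg$ has more than one dominating vertex.

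In the non-full-exponent case, the plan is to show $\sg - \{e\}$ is not regular. Since $\deg_\sg(z)$ depends only on $o(z)$, namely $\deg_\sg(z) = C(m) - 1$ where $m = o(z)$ and $C(m) := |\{w \in G : o(w) \mid m \text{ or } m \mid o(w)\}|$, it suffices to exhibit two elements of distinct orders whose $C$-values differ. For instance, if $2q \in \pi_G$, then an element $z$ of order $2q$ satisfies $C(2q) = C(q) + |\{w : o(w) = 2\}|$, so $\deg(z) > \deg(y)$; at the other extreme, if $\pi_G = \{1, 2, q\}$, then $|G| - 1 = \deg(x) + \deg(y)$ is odd, forcing $\deg(x) \neq \deg(y)$. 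The main obstacle is to carry out such a comparison uniformly across every intermediate configuration of $\pi_G$ when $G$ lacks full exponent; here one would combine a case analysis on the orders appearing in $\pi_G$ with Frobenius's congruence $|\{w \in G : w^m = e\}| \equiv 0 \pmod{m}$ and the existence of multiple maximal elements of $\pi_G$ under divisibility to force a discrepancy among the values $\{C(m) : m \in \pi_G \setminus \{1\}\}$, thereby contradicting the regularity of $\sg - \{e\}$.
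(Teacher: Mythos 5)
Your reduction via Theorem \ref{graph} is sound, and the full-exponent branch (multiple dominating vertices, or equivalently Corollary \ref{sg full exponent}) is fine. The problem is that the main branch --- showing $\sg-\{e\}$ is not regular when $G$ lacks full exponent --- is not actually proved. You handle two special configurations of $\pi_G$ and then explicitly defer the rest (``the main obstacle is to carry out such a comparison uniformly across every intermediate configuration''), so the argument as written has a hole exactly where the work lies. Moreover, one of your two worked examples is wrong: the identity $C(2q)=C(q)+|\{w: o(w)=2\}|$ fails in general, since an element of order $q^2$ (or of order $rq$ for a third prime $r$) is counted in $C(q)$ but not in $C(2q)$, so the two neighbourhoods are not nested in the way the formula assumes. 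A Frobenius-congruence case analysis might eventually be made to work, but nothing in the proposal establishes that it does.

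The paper closes this gap with a single parity argument that needs no case split and no full-exponent dichotomy. Take $x$ of order $2$ and $y$ of odd prime order $p$ (both exist by Cauchy). The neighbours of $x$ in $\sg-\{e\}$ are the involutions other than $x$ (an even number, since a group of even order has an odd number of involutions) together with the elements of even order greater than $2$ (an even number, as that set is inversion-closed with no fixed points); so $\deg(x)$ is even. The neighbours of $y$ are the elements of order divisible by $p$ other than $y$ itself; that set before deleting $y$ is inversion-closed with no fixed points, hence even, so $\deg(y)$ is odd. Thus $\deg(x)\neq\deg(y)$, the graph $\sg-\{e\}$ is not regular, and Theorem \ref{graph} applies. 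You should replace your case analysis on $\pi_G$ with this parity count (or supply a complete uniform argument of your own); as it stands the proposal does not prove the proposition.
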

\begin{proof}
   Let $G$ be a group of even order which is not a $p$-group. Clearly, the identity element $e\in G$ is a dominating vertex of $\sg$. Let $x,y \in G$ such that $o(x)=2$ and $o(y) = p$, where $p$ is an odd prime. Then $x$ is adjacent to all other elements of order 2 in $G$. Note that $G$ has odd number of elements of order 2. For $y_1 \in G$, if $x \sim y_1$ such that $o(y_1) \geq 2$, then $x \sim y_1^{-1}$. Consequently, $deg(x)$ is even in $\mathcal{S}^*(G)$. Now, if $y \sim z$ for $z \neq e \in G$, then $y \sim z^{-1}$. Also $y \sim y^{-1}$. Thus $deg(y)$ is odd in $\mathcal{S}^*(G)$. It follows that $\mathcal{S}^*(G)$ is not regular. By Theorem \ref{graph}, $\sg$ is not minimally edge connected. 
   \end{proof}
\begin{corollary}
  \begin{enumerate}
      \item [(i)] The order superpower graph $\sg$ of a non-cyclic simple group cannot be minimally edge connected.
      \item [(ii)] If $G \in \{ S_n, D_{2n}, SD_{8n}, Q_{4n}\}$, then the order superpower graph $\sg$ is not minimally edge connected.
  \end{enumerate}
\end{corollary}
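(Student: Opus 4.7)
The plan is to derive both parts as immediate consequences of the preceding proposition, which asserts that $\sg$ is not minimally edge connected whenever $G$ has even order and is not a $p$-group. So for each class of groups, I need only verify these two hypotheses.

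For part (i), let $G$ be a non-cyclic simple group. Since every abelian finite simple group is cyclic of prime order, $G$ must be non-abelian simple. I would first rule out that $G$ is a $p$-group: any nontrivial finite $p$-group has a nontrivial center $Z(G)$, which is a proper normal subgroup unless $G=Z(G)$; if $G$ is simple this forces $G$ to be abelian and hence cyclic of prime order, contradicting the assumption. Next, to obtain that $|G|$ is even, I would invoke the Feit--Thompson odd-order theorem, which guarantees that every non-abelian finite simple group has even order. With both hypotheses verified, the preceding proposition yields the conclusion.

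For part (ii), I would perform a straightforward case check, showing that each listed group is of even order and not a $p$-group (under the standing hypothesis that $n$ is chosen so the group in question has more than one prime divisor; otherwise $D_{2n}, Q_{4n}, SD_{8n}$ could degenerate to $2$-groups):
\begin{itemize}
\item $S_n$ for $n\geq 3$: $|S_n|=n!$ is even, and $6 \mid n!$, so two distinct primes divide $|S_n|$.
\item $D_{2n}$: order $2n$ is even; if $n$ has an odd prime divisor, $|D_{2n}|$ is divisible by two distinct primes.
\item $Q_{4n}$: order $4n$ is even; the same remark about an odd prime divisor of $n$ applies.
\item $SD_{8n}$: order $8n$ is even; again an odd prime divisor of $n$ produces two distinct prime divisors of $|G|$.
\end{itemize}
In each case the hypotheses of the preceding proposition are satisfied, and the result follows.

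The main (and only) substantive obstacle is the appeal to Feit--Thompson in part (i); everything else is routine bookkeeping about orders of the specific families in part (ii).
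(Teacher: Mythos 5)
Your proposal follows exactly the route the paper intends: the corollary is stated immediately after the proposition on groups of even order that are not $p$-groups, and both parts are meant to follow by verifying those two hypotheses, with part (i) indeed requiring the Feit--Thompson odd-order theorem (plus the center argument ruling out simple $p$-groups) just as you describe. Your caveat about the degenerate cases is a genuine and necessary correction to the statement itself: for instance $Q_8$, $D_8$, $SD_{16}$ and $S_2$ are $2$-groups, so $\sg$ is complete and, by the paper's own Corollary \ref{sg full exponent}, \emph{is} minimally edge connected, so part (ii) only holds under your added hypothesis that the group order has at least two prime divisors.
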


\begin{theorem}\label{sg-minimallyconnected}
    Let $G$ be a finite group. Then the order superpower graph $\sg$ is minimally connected if and only if $G$ is a $p$-group.
\end{theorem}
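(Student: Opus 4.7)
The plan is to prove both directions of the equivalence, running almost in parallel to the proof of Theorem~\ref{pgh minimally connected}. For sufficiency, recall from the discussion preceding Corollary~\ref{sg full exponent} that $\sg$ is complete whenever $G$ is a $p$-group. A complete graph $K_n$ is itself minimally connected: $\kappa(K_n)=n-1$, while deleting any edge $\{u,v\}$ leaves a graph whose minimum vertex cut separating $u$ from $v$ consists of the remaining $n-2$ vertices, so $\kappa(K_n-\epsilon)=n-2=\kappa(K_n)-1$. Hence $\sg$ is minimally connected whenever $G$ is a $p$-group.

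For necessity, I would argue by contraposition. Suppose $\sg$ is minimally connected but $G$ is not a $p$-group. Then $|G|$ has at least two distinct prime divisors, so some odd prime $p$ divides $|G|$; by Cauchy's theorem there exists $x\in G$ with $o(x)=p\geq 3$, giving $x\neq x^{-1}$ and producing an edge $\epsilon=\{x,x^{-1}\}$ in $\sg$. Let $S$ be a minimum vertex cut-set of $\sg-\epsilon$, so that $|S|=\kappa(\sg)-1$; in particular $\sg-S$ is still connected, while $(\sg-\epsilon)-S$ contains two vertices $y_1,y_2$ with no joining path. The driving observation is that $o(x)=o(x^{-1})$, which forces $x$ and $x^{-1}$ to share the same open neighborhood in $\sg$ outside $\{x,x^{-1}\}$: for every $z\neq x,x^{-1}$, a divisibility relation between $o(z)$ and $o(x)$ holds iff it holds between $o(z)$ and $o(x^{-1})$.

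I would then split into the usual two cases. If at least one of $y_1,y_2$ lies outside $\{x,x^{-1}\}$, any $y_1$-to-$y_2$ path in the connected graph $\sg-S$ must use $\epsilon$, and the identical-neighborhood property lets one shortcut the subpath $x_k\sim x\sim x^{-1}$ to $x_k\sim x^{-1}$, producing a path in $(\sg-\epsilon)-S$, a contradiction. Otherwise $\{y_1,y_2\}=\{x,x^{-1}\}$: if $G=S\cup\{x,x^{-1}\}$ then $|S|=|G|-2$ forces $\kappa(\sg)=|G|-1$, so $\sg$ is complete and $G$ is a $p$-group, contradicting our assumption; if instead some $z\in G\setminus(S\cup\{x,x^{-1}\})$ exists, then any neighbor $w$ of $x$ in $\sg-S$ other than $x^{-1}$ would, by the identical-neighborhood property, satisfy $w\sim x^{-1}$, yielding a length-two path $x\sim w\sim x^{-1}$ in $(\sg-\epsilon)-S$; hence $x^{-1}$ is the only $\sg-S$-neighbor of $x$ (and symmetrically $x$ of $x^{-1}$), making $\{x,x^{-1}\}$ a proper connected component of $\sg-S$ while $z$ lies in another, contradicting the connectedness of $\sg-S$. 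The main obstacle is precisely this last subcase, where one must exploit the identical-neighborhood principle to isolate $\{x,x^{-1}\}$ from the rest of $\sg-S$ and thereby reach the desired contradiction.
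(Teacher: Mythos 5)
Your proposal is correct and follows essentially the same route as the paper: the paper's proof of this theorem explicitly defers to the argument of Theorem~\ref{pgh minimally connected}, and you carry out precisely that argument for $\sg$, with the needed substitute fact being that $o(x)=o(x^{-1})$ forces $x$ and $x^{-1}$ to have the same neighbours, plus the observation that $\sg$ is complete (hence minimally connected) exactly for $p$-groups. The only cosmetic difference is that you phrase necessity contrapositively and invoke Cauchy's theorem to produce an element of order at least $3$, whereas the paper splits off the case where every non-identity element has order two.
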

\begin{proof} 
    Let $\sg$ be minimally connected. If all the non-identity elements of $G$ are of order two, then $G$ is a $2$-group. If there exits an element $x \in G$ such that $o(x) \geq 3$, then in the similar lines of the proof of the Theorem \ref{pgh minimally connected}, we obtain that $G$ is $p$-group.

    Conversely, if $G$ is a $p$-group then $\sg$ is complete and so is minimally connected.
    \end{proof}

Now we classify all the nilpotent groups such that the minimum degree and the vertex connectivity of its order superpower graphs are equal. The following proposition is useful.

\begin{proposition}{\label{order 2 unique}}
    Let $G$ be a finite group which is not a $p$-group and let $x \in G$ such that $deg(x) = \delta(\sg) = \kappa(\sg)$. Then the following hold:
    \begin{enumerate}
        \item[(i)] $x$ is the unique element of order $2$ in $G$.
        \item[(ii)] There does not exist any element of order $2^{\alpha}$, where $\alpha \geq 2$, in $G$.
    \end{enumerate}
\end{proposition}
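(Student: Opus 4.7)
My plan rests on one structural observation: in $\sg$, the closed neighborhood $N[u]$ depends only on $o(u)$, so two elements of the same order share the same closed neighborhood — in particular $N[u] = N[u^{-1}]$. This rigidity lets me exhibit a vertex cut of size $\kappa(\sg) - 1$ whenever $x$ behaves badly, producing the desired contradictions. Before splitting into cases I would record that, since $G$ is not a $p$-group, $\sg$ is not complete, so $\delta(\sg) \le |G| - 2$; hence $x$ is non-dominating and $G \setminus N[x] \neq \emptyset$.

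For part (i), I first show $o(x) = 2$. Assume $o(x) \ge 3$; then $x^{-1} \neq x$ and $x^{-1} \in N(x)$. I set $S := N(x) \setminus \{x^{-1}\}$, of size $\kappa(\sg) - 1$. Since $N[x^{-1}] = N[x]$, in $\sg - S$ the vertices $x$ and $x^{-1}$ have lost all their neighbors except each other, so $\{x, x^{-1}\}$ is a connected component disjoint from the non-empty set $G \setminus N[x]$. Hence $S$ is a vertex cut of size $\kappa(\sg) - 1$, contradicting the definition of $\kappa(\sg)$. Thus $o(x) \in \{1,2\}$, and since $e$ is dominating while $x$ is not, $o(x) = 2$. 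For uniqueness I repeat the same twin argument: any second element $y$ of order $2$ satisfies $N[y] = N[x]$, and $S' := N(x) \setminus \{y\}$ is a cut of size $\kappa(\sg) - 1$ isolating $\{x, y\}$.

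For part (ii), I would apply (i) to any hypothetical element $z$ of order $2^\alpha$ with $\alpha \ge 2$. The crucial containment is $N[z] \subseteq N[x]$: if $o(w) \mid 2^\alpha$ then $o(w) \in \{1,2,4,\dots,2^\alpha\}$, so $o(w) = 1$ or $o(w)$ is even; and if $2^\alpha \mid o(w)$ then $o(w)$ is even. Either way $w \in N[x]$. This forces $deg(z) \le deg(x) = \delta(\sg)$, hence equality, so $deg(z) = \delta(\sg) = \kappa(\sg)$. Part (i) applied to $z$ then demands $o(z) = 2$, contradicting $o(z) \ge 4$.

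The main obstacle is spotting the twin cut $S = N(x) \setminus \{x^{-1}\}$ in part (i); once the order-invariance of $N[\cdot]$ in $\sg$ is recognized, both parts reduce to short counting arguments.
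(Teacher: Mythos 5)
Your proof is correct and follows essentially the same route as the paper: the ``twin'' vertex cuts $N(x)\setminus\{x^{-1}\}$ and $N(x)\setminus\{y\}$ for part (i), and the neighborhood containment $N[z]\subseteq N[x]$ for part (ii). If anything, your write-up is slightly more careful than the paper's at two points --- the paper's cut-set is misprinted as $N(x)\setminus\{x\}$, and its part (ii) ends with a terse ``which is not possible'' where you correctly close the loop by noting $deg(z)=\delta(\sg)=\kappa(\sg)$ and reapplying part (i).
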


\begin{proof}
(i) Assume that $o(x) \geq 3$. Clearly, $N[x] \neq G$. Let $S = N(x) \setminus \{x\}$. Note that $S$ is a vertex cut-set of $\sg$ and $deg(x) > |S| \geq \kappa'(\sg)$, a contradiction. Thus $o(x)=2$. Now suppose that $G$ has $2$ elements $x,y \in G$ such that $o(x)=o(y)=2$. Then $deg(x)=deg(y)= \delta(\sg)$. Observe that $S' = N(x)\setminus \{y\}$ is a vertex cut-set of $\sg$ and $|S'| < deg(x) = \delta(\sg)$, again a contradiction.\\

(ii) By part (i), there exists a unique element $x$ of order $2$ such that $deg(x) = \delta(\sg)$. If possible, let $y \in G$ such that $o(y) = 2^\alpha$ for $\alpha \geq 2$. Consider $y_1 \in \langle y \rangle$ such that $o(y_1)=4$. Note that for $z \neq x \in G$, if $z \sim y_1$, then $z \sim x$. Consequently, $deg(y_1) \leq deg(x)$, which is not possible.
\end{proof}

\begin{theorem}\label{min degree = vertex connectivity of SG}
    Let $G$ be a finite nilpotent group. Then $\delta(\sg) = \kappa(\sg)$ if and only if one of the following holds:
    \begin{enumerate}
        \item[(i)] $G$ is  a $p$-group.
        \item[(ii)] $G \cong \mathbb{Z}_{2} \times P$ where $P$ is a  $p-$group with $p > 2$.
         \end{enumerate}
\end{theorem}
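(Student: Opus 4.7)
The "if" direction splits into two cases. When $G$ is a $p$-group, $\sg$ is complete and the equality $\delta(\sg) = \kappa(\sg) = |G|-1$ is immediate. For $G \cong \mathbb{Z}_2 \times P$ with $P$ a $p$-group and $p>2$, I would first identify the unique element $x = (1,e)$ of order $2$ and compute its degree: since every element of $P$ has odd order, $x$ is adjacent only to $(0,e)$ and to the elements $(1,y)$ with $y \neq e$, so $\deg(x) = |P|$. A short check of the other two vertex orbits shows that $x$ realizes $\delta(\sg) = |P|$, and since $N(x)$ is a vertex cut we immediately get $\kappa(\sg) \le |P|$.

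The nontrivial step of the "if" direction is to prove $\kappa(\sg) \ge |P|$. I would take any minimum cut $S$ and note first that $(0,e) \in S$, because $(0,e)$ is a dominating vertex. Split into cases by whether $(1,e) \in S$. In the principal case $(1,e) \notin S$, the component $A$ containing $(1,e)$ absorbs every $(1,y)$ outside $S$, so any second component $B$ lies in $\{(0,y) : y \neq e\}$. Choosing $(0,y_0) \in B$ with $p^k = o(y_0)$ minimal among vertices of $B$, the adjacency rules --- $(0,y) \sim (0,y')$ for all $y,y' \in P$, and $(0,y_0) \sim (1,y)$ iff $o(y) \ge p^k$ --- force $S$ to contain every $(0,y)$ with $o(y) < p^k$ and every $(1,y)$ with $o(y) \ge p^k$, which is exactly $|P|$ vertices. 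The remaining case $(1,e) \in S$ is analogous and gives $|S| \ge |P|+1$. The main obstacle I foresee lies in this case analysis: because $(1,e)$ is not dominating (only $(0,e)$ is), one must carefully track how severing the "upstairs" vertices $(1,y)$ from the "downstairs" vertices $(0,y)$ interacts with the divisibility structure on orders.

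For the "only if" direction, suppose $\delta(\sg) = \kappa(\sg)$ and $G$ is not a $p$-group. Let $x$ be any vertex attaining $\delta(\sg)$; Proposition~\ref{order 2 unique} forces $x$ to be the unique element of order $2$ in $G$ and rules out any element of order $2^\alpha$ with $\alpha \ge 2$. Combined with the nilpotency of $G$ (Theorem~\ref{nilpotent}), the Sylow $2$-subgroup of $G$ must be $\mathbb{Z}_2$, and hence $G = \mathbb{Z}_2 \times G'$ with $|G'|$ odd. The same kind of degree count as above yields $\deg(x) = |G'|$, which therefore equals $\delta(\sg)$.

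It remains to show that $G'$ has only one Sylow prime. Suppose, toward contradiction, that $G' = P_1 \times \cdots \times P_k$ with $k \ge 2$ distinct odd primes $p_1,\dots,p_k$. For each $i$ pick $y_i \in P_i$ of order $p_i$ and set $z_i = (1, y_i)$, so $o(z_i) = 2p_i$. Splitting $N(z_i)$ into its $(0,\cdot)$ and $(1,\cdot)$ parts and applying divisibility of orders, one obtains
\[
\deg(z_i) = |G'| - m_i + 1 + n_{p_i},
\]
where $m_i = |G'|/|P_i|$ and $n_{p_i}$ is the number of order-$p_i$ elements of $P_i$. The desired inequality $\deg(z_i) < |G'| = \deg(x)$ becomes $1 + n_{p_i} < m_i$. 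If this failed for every $i$, then $|P_i| \ge 1 + n_{p_i} \ge m_i$, whence $|P_i|^2 \ge |G'|$ for all $i$; taking the product gives $|G'|^2 \ge |G'|^k$, impossible for $k \ge 3$, while for $k=2$ the forced equality $|P_1|=|P_2|$ is impossible since these are coprime nontrivial prime powers. Hence some $z_i$ has $\deg(z_i) < \deg(x)$, contradicting the minimality of $\deg(x) = \delta(\sg)$, so $G' = P$ must be a single $p$-group with $p>2$.
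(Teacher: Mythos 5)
Your proof is correct, and its overall skeleton matches the paper's: Proposition~\ref{order 2 unique} pins the Sylow $2$-subgroup down to $\mathbb{Z}_2$, a degree comparison eliminates a second odd prime, and the converse computes $\delta(\sg)=|P|$ and bounds $\kappa(\sg)$ below by counting vertices forced into a minimum cut. Two steps are executed genuinely differently, though. In the ``only if'' direction the paper assumes $r\ge 3$, singles out the smallest odd Sylow subgroup $P_2$, and compares $\deg(x)$ with $\deg(y)$ for $o(y)=2p_2$ via the inequality $\deg(x)\ge \deg(y)-|S_1|+|S_2|$; you instead derive the exact formula $\deg(z_i)=|G'|-m_i+1+n_{p_i}$ and dispose of all $k\ge 2$ at once through the chain $m_i\le 1+n_{p_i}\le|P_i|$, the product inequality $|G'|^2\ge|G'|^k$, and the coprimality argument for $k=2$. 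This is cleaner and more uniform than the paper's choice-of-minimal-Sylow device, which as written only treats $r\ge3$ explicitly. In the converse, the paper organizes the cut-set count by the orders of the two separated vertices $u_1,u_2$, whereas you organize it by whether the involution $(1,e)$ lies in the cut and observe that any second component must sit inside the odd-order part; both routes force the same $|P|$ vertices (all $(0,y)$ with $o(y)<p^k$ and all $(1,y)$ with $o(y)\ge p^k$) into the cut, so the difference is organizational. The two points you leave as sketches --- the ``short check'' that $x$ attains $\delta(\sg)$ and the ``analogous'' case $(1,e)\in S$ (which indeed yields $|S|\ge|P|+1$) --- are routine and check out.
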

\begin{proof}
Let $G = P_1 \times P_2 \times \cdots \times P_r$ be a finite nilpotent group such that $\delta(\sg)=\kappa(\sg)$. If $G$ is a $p$-group, then the result holds. Now suppose $r\geq2$. If possible, assume that $r\geq3$. By Proposition \ref{order 2 unique}, $G$ has Sylow 2-subgroup, which is isomorphic to a cyclic group of order two. Without loss of generality, assume that $P_1\cong \mathbb{Z}_2$ and $P_2$ be a Sylow $p_2$-subgroup of $G$ such that $|P_2|<|P_j|$ for all $j \in [r]\setminus\{1,2\}$. Let $x \in G$ be such that $o(x)=2$. Then by Proposition \ref{order 2 unique}, we have $deg(x) = \delta(\sg)$. Consider $y \in G$ such that $o(y)=2p_2$. Note that for $z\neq x$, if $z \sim y$ and $o(x) \neq p_2$, then $z \sim x$. Also, $x$ is adjacent to all the elements whose order is of the form $2p_{3}^{k}$. Consequently, we get $deg(x) \geq deg(y)-|S_1|+|S_2|$, where $S_1 = \{ x \in G: o(x)= p_2\}$ and $S
_2 = \{ y \in G: o(y)= 2p_3^{\alpha},$ where $\alpha \geq 1\}$. Note that $|S_1| \leq |P_2|$ and $|S_2|=|P_3|$ since $|P_2| < |P_3|$. Therefore, $deg(x) > deg(y)$, which is a contradiction. Thus, $r=2$ and so $G \cong \mathbb{Z}_2 \times P$, where $P$ is a $p$-group.

Conversely, if $G$ is a $p$-group, then $\sg$ is complete and hence $\delta(\sg)=\kappa(\sg)$. Now, let $G \cong \mathbb{Z}_2 \times P$, where $P$ is a $p$-group with exponent $p^{\alpha}$. Thus, $\pi_{G} = \{ 1,2,p,p^2,\ldots,p^{\alpha},2p,2p^{2},\ldots,2p^{\alpha}\}$. Suppose $x \in G$ such that $o(x)=2$. For $1\leq i\leq \alpha$, consider the sets $S_{i} = \{ g \in G: o(g)=p^i\}$ and $S_{i}^{'}=\{g \in G: o(g)=2p^i\}$. Note that $G = \{x,e\} \cup\{ \bigcup_{i=1}^{\alpha} S_i\} \cup \{ \bigcup_{i=1}^{\alpha}S_i^{'}\}$ is a partition of $G$. Clearly, $|S_i|=|S_{i}^{'}|$ and $\sum_{i=1}^{\alpha}|S_i| = |P|-1$. Note that $N(x)=\{e\} \cup ( \bigcup_{i=1}^{\alpha}S_i^{'})$. Consequently, $deg(x)=|P|$. Now we show that $\delta(\sg)=|P|$. Clearly, $deg(e)>deg(x)$. Suppose $y \neq x \in G$ is a non-identity element. Now, if $y \in S_i$ for some $i$, then note that $(\bigcup_{j=1}^{\alpha}S_j) \cup S_{i}^{'} \cup \{e\} \subseteq N(y)$. It follows that $deg(y)>deg(x)$. Similarly, for $y \in S_{i}^{'}$ for some $i$, we obtain $deg(y) > deg(x)$. It implies that $deg(x)= \delta(\sg) =|P|$. Now, we prove that $\kappa(\sg) = |P|$. On the contrary, assume that $\kappa(\sg) < |P|$. Let $T$ be a cut-set of minimum size in $\sg$. Clearly, $e \in T$. Suppose $u_1$ and $u_2$ are two elements such that there exists no path between $u_1$ and $u_2$ in $\sg \setminus T$. Then we have the following cases:\\
    \textbf{Case 1:} \emph{$o(u_1) = 2$ and $o(u_2) = p_2^{k}$, for some $1 \leq k \leq \alpha$}. Now, for $v \in \bigcup_{i=k}^{\alpha} S_{i}^{'}$, we have $u_{1} \sim v \sim u_2$ in $\sg$. Thus. $\bigcup_{i=k}^{\alpha} S_{i}^{'} \subseteq T$. For $1 \leq j \leq k-1$, for each $v_1 \in S_j$ and $v_2 \in S_{j}^{'}$, there exist a path $u_1 \sim v_2 \sim v_1 \sim u_2$ in $\sg$. Therefore, for each $j, 1 \leq j \leq k-1$, either $S_{j}^{'} \subseteq T$ or $ S_j \subseteq T$. Thus, we get $|T| \geq |P|$, a contradiction.\\
    \textbf{Case 2:} \emph{$o(u_1) = p^{\delta}$ and $o(u_2) = 2p^{\delta^{'}}$ where $\delta > \delta^{'}$}. For $\delta \leq i \leq \alpha$, if $z \in S_{i}^{'}$, then there exist a path $u_1 \sim z \sim u_2$. Thus $\bigcup_{i= \delta}^{\alpha}S_{i}^{'} \subseteq T$. Now, for $\delta^{'} + 1 \leq i \leq \delta - 1$, let $z_1 \in S_i$ and $z_2 \in S_{i}^{'}$, we get a path $u_1 \sim z_1 \sim z_2 \sim u_2$. It implies that either  $\bigcup_{\delta^{'}+1} ^{\delta-1} S_{i} \subseteq T$ or $\bigcup_{\delta^{'}+1} ^{\delta-1} S_{i}^{'} \subseteq T$. Now, for $1 \leq i \leq \delta^{'}$ and $t \in S_{i}$ then we obtain a path $u_1 \sim t \sim u_2$. It implies that $\bigcup_{i=1}^{\delta^{'}}S _i \subseteq T$. Thus, $|T| \geq |P|$; which is a contradiction. Hence, $\kappa(\sg) = |P|$.
 \end{proof}
\section*{Declarations}

\textbf{Funding}: The second and the third author wishes to acknowledge the support of Core Research Grant (CRG/2022/001142) funded by  SERB.

\textbf{Conflicts of interest/Competing interests}: There is no conflict of interest regarding the publishing of this paper.

\textbf{Availability of data and material (data transparency)}: Not applicable.

\vspace{.3cm}
\textbf{Code availability (software application or custom code)}: Not applicable.

\vspace{1cm}
\noindent
{\bf Parveen\textsuperscript{\normalfont 1}, \bf Manisha\textsuperscript{\normalfont 1}, \bf Jitender Kumar\textsuperscript{\normalfont 1}}
\bigskip

\noindent{\bf Addresses}:

\vspace{5pt}

\end{document}